\numberwithin{equation}{section}
\newenvironment{customthm}[1]
  {\innercustomthm}
  {\endinnercustomthm}
\theoremstyle{definition}
\theoremstyle{plain}
\newtheorem{thr}{Theorem}
\newtheorem{cor}{Corollary}
\newtheorem{lemma}{Lemma}
\newtheorem{propos}{Proposition}
\theoremstyle{remark}
\newtheorem{rem}{Remark}
\newcommand{\RN}{\ensuremath\mathbb{R}} 
\newcommand{\NN}{\ensuremath\mathbb{N}} 
\newcommand{\ZN}{\ensuremath\mathbb{Z}} 
\newcommand{\eps}{\ensuremath\varepsilon}
\newcommand{\sht}{^{\prime}}
\newcommand{\intlm}{\int\limits}
\newcommand{\ib}{\intlm_{\Omega_R}}
\newcommand{\nbu}{\nabla u_R}
\newcommand{\ns}{\nabla \sigma}
\newcommand{\n}{\mathfrak{n}}
\newcommand{\supp}{\mathrm{supp}}
\newcommand{\emb}{\hookrightarrow}
\begin{document}

\title{Abundance of entire solutions \\to nonlinear elliptic equations\\by the variational method}
\author{L.M.\,Lerman$^1$, P.E.\,Naryshkin$^2$, A.I.\,Nazarov$^{2, 3}$\\
\normalsize $^1$ Lobachevsky State University of Nizhny Novgorod\\
\normalsize
\normalsize$^2$ Saint Petersburg State University \\
\normalsize$^3$ St. Petersburg Dept. of Steklov Math. Institute
}
\date{}
\maketitle
\begin{abstract}
We study entire bounded solutions to the equation $\Delta u - u + u^3 = 0$ in $\RN^2$.
Our approach is purely variational and is based on concentration arguments
and symmetry considerations. This method allows us to construct in a unified way several
types of solutions with various symmetries (radial, breather type, rectangular,
triangular, hexagonal, etc.), both positive and sign-changing. The method is also applicable for
more general equations in any dimension.
\end{abstract}

\section{Introduction}

Studying solutions to nonlinear elliptic equations is the classical
problem arising both in PDE research and applications of PDEs in
geometry, physics and material sciences. Indeed, many problems of
nonlinear physics can be mathematically formulated as studying steady
states of nonlinear scalar fields in nonlinear media. Thus, it is necessary
to investigate solutions with nontrivial
spatial structures (patterns) to nonlinear elliptic PDEs. One of the
simplest PDEs of this kind is
\begin{equation}
\Delta u+f(u)=0,
\label{0.1}
\end{equation}
considered in some domain $D\subset \mathbb R^n$ together with some boundary
conditions, or in the whole space $\mathbb R^n$ with prescribed behavior at infinity.
Important solutions with patterns for this equation are those which are
localized in one or more variables. This kind of problems
can be linked with the study of self-localized solutions in wave-guide optical channels \cite{CGT},
investigations of ``particle-like'' states of nonlinear fields in some
models of elementary particles \cite{R}, the description of bi-phase separation in fluids \cite{CH, Cahn}
and ordering in binary alloys \cite{AC}. The problem of studying patterns in such equations is,
in a sense, an analogue of problems from the theory of dynamical systems,
where ideally one needs to study and classify all solutions to a given system of autonomous
differential equations. Unfortunately, nowadays the theory of elliptic
PDEs is not as developed as the theory of dynamical systems, where we know at least primary objects
 to be studied (equilibria, periodic orbits, quasi-periodic orbits,
homoclinic and heteroclinic structures etc.). In the absence of a
general theory studying certain interesting but more or less ``simple''
model equations is one of the main problems now. Undoubtedly, equation
(\ref{0.1}) belongs to such models.

Solutions to the equation or the system of equations of type (\ref{0.1}) also
correspond to stationary solutions of the evolution equations such as reaction-diffusion
parabolic or wave PDEs. Stationary solutions for these evolution equations are of the primary
interest since this is a starting point to study non-stationary solutions close to stationary,
various stability problems, asymptotic behavior of solutions, etc. Moreover, as it was mentioned in
\cite{Wei}, PDEs of type (\ref{0.1}) do have solutions with interesting patterns,
and the structure of their solution sets has remained mostly a mystery.
This is especially true for solutions given on the whole space (so-called \textit{entire solutions})
considered in our paper.\

Various methods were developed to find solutions of equations (\ref{0.1}) with some prescribed
structures. In particular, for solutions localized in space one method is to search for
radial solutions, i.e. those which depend only on the radial variable $r$.
It is extremely interesting that the only possible positive bounded solutions of (\ref{0.1})
are radial, if some restrictions on nonlinearity are imposed \cite{BNN}.
When searching for radial solutions the problem is reduced to the study of some specific
solutions of the related nonautonomous second order ODE with ``time'' $r$. For some types of
nonlinearities the existence of infinitely many (sign-changing) radial solutions was proved
in \cite{Jones}. Those results were extended to more complicated higher order equations
\cite{KLSh,Sand_Cal}.

Another method was proposed in \cite{Kirsch} and developed by several authors for elliptic
equations (systems) with nonlinearities of various types, see, for instance,
\cite{Mielke,KS,Sand,BI,BIS}. This method has its roots in the theory of center manifolds for ODEs
and allows one to construct solutions of elliptic equations in cylinder-like domains
that are unbounded in some distinguished direction. In that case the original elliptic PDE can be formally
written as an evolutionary
equation in the proper functional space but this equation is usually ill-posed in the
whole space. However, if one is lucky to find a finite dimensional
invariant center submanifold of the proper smoothness, then the restriction of the
evolutionary equation to this submanifold gives a finite dimensional
flow. Orbits of this flow generate solutions of the original equation.

A different approach to finding solutions is related to bifurcation methods, in
particular, the Lyapunov-Schmidt reduction. These methods allow for construction of two-dimensional
solutions with triangle symmetry \cite{paapeW} and solutions of the equation
(\ref{0.1}) in $\RN^n$ periodic in one variable and decaying in other variables \cite{Dancer}.

In this paper we construct entire bounded solutions to (\ref{0.1}) which have various types of
symmetries and may also decay in some directions. We use purely variational approach
which allows us to construct these solutions using the concentration-compactness principle
\cite{Lio} and symmetry considerations \cite{Ka}.

The paper is organized as follows. We start by studying the model equation in $\RN^2$
\begin{equation}
\Delta u - u + u^3  = 0.
\label{MainEq}
\end{equation}
In Section 2 we are concerned with periodic solutions. In Section 3 we discuss radially symmetric
solutions. Generalization of our results for higher dimensions and more general equations
are presented in Section 4. The basic known facts used in the proofs as well as some necessary
technical lemmata are collected in Appendix \hyperref[ssec:App A]{A}.

In Appendix \hyperref[ssec:App B]{B} we give for the comparison a brief explanation of using
the central manifold method for our equation \eqref{MainEq}. This method was proposed in
\cite{AEKLS} in order to construct the so-called \textit{breather type} solutions which are
localized in one variable and periodic in another variable, see also \cite{AA} and \cite{EKLU}.

In Appendix \hyperref[ssec:App C]{C} we show another approach of construction of breather type
solutions using the Bubnov-Galerkin approximations. This method reduces the problem to
finding homoclinic solutions to some saddle equilibrium of a proper Hamiltonian system. However,
in this way we obtain only an approximate solution of the original problem. See, in this
respect, the paper \cite{Alessio} where the close connection between two-dimensional solutions
of the Allen-Cahn equation stabilizing in one variable at infinity and
heteroclinic solutions of the related one-dimensional equation was established.

We wish to stress that the variational method used in our paper is rather general, applicable
in any dimension and allows us to construct in a unified way several types of solutions
(radial, breather type, rectangular, triangular, hexagonal, etc.), both positive and sign-changing.

\medskip

Let us introduce some notations. We use letter $C$ to denote various positive constants.
To indicate that some $C$ depends on a parameter $a$, we write sometimes $C(a)$.

For a domain $\Omega \subset \RN^n$ we denote by $\Omega_R$ the set $\{Rx \mid x \in \Omega\}$.

We use the notation $B(x, R)$ for a ball of radius $R$ centered at $x$.

Notations $o_R(1)$ and $o_\eps(1)$ mean $o(1)$ as $R \to \infty$ and $\eps \to 0$ respectively.

For $1 < p < \infty$ we define
$$
p^* = \begin{cases}
\frac{np}{n-p}, & p < n; \\
+\infty, & \mbox{otherwise}.
\end{cases}
$$
Recall that $p^*$ is the critical Sobolev embedding exponent when $p < n$.

\section{Periodic solutions in $\RN^2$}

\subsection{Some general results}
\label{ssec:General_results}

Suppose $\Omega$ is a domain in $\RN^2$ with piecewise smooth boundary. Consider the variational
problem for the energy functional

\begin{equation}\label{VarProblem}
J[u] = \frac{\ib (|\nabla u|^2+ |u|^2)\, dx}{\Big(\ib |u|^4 \, dx \Big)^{1/2}}\to\,\min .
\end{equation}
Since $J[cu] = J[u]$ it is equivalent to finding a constrained minimum for the problem
\begin{equation}\label{ConMinProblem}
\ib (|\nabla u|^2+ |u|^2)\, dx \to\,\min; \qquad \ib |u|^4\, dx = 1.
\end{equation}
Suppose this minimum is attained (this holds, for instance, in bounded domains, see Remark \ref{ResForCompact} below). Let $v_R$ be a minimizer. Then it satisfies the Euler-Lagrange
equation as well as the natural boundary conditions. Thus it is a weak solution of the problem
\begin{equation*}
-\Delta v +v = \lambda v^3 \ \mbox{in } \Omega_R; \qquad \left.\frac{\partial v}
{\partial \n}\right|_{\partial \Omega_R} = 0,
\end{equation*}
where $\n$ stands for the unit exterior normal vector on $\partial
\Omega_R$.  Notice that the Lagrange multiplier $\lambda$ coincides with the sought-for
minimum of the problem \eqref{ConMinProblem}. By the elliptic regularity theory,
 $v_R$ is a classical solution.

Since $J[|u|] = J[u]$ we can presume $v_R \ge 0$, and after that the maximum principle gives
$v_R > 0$ in $\Omega_R$. We multiply $v_R$ by $\sqrt{\lambda}$ and get another minimizer
of \eqref{VarProblem} $u_R$ which satisfies
\begin{equation}
-\Delta u +u = u^3 \ \mbox{in } \Omega_R; \qquad \left.\frac{\partial u}{\partial \n}
\right|_{\partial \Omega_R} = 0 .
\label{EulerEq}
\end{equation}
We note that $||u_R||_{L^4} = \sqrt{\lambda}$ evidently depends on $R$. We now prove it cannot
be either too small or too large.

\begin{lemma}
\label{LambdaBoundSepar}
$||u_R||_{L^4} = \sqrt{\lambda(R)}$ is bounded and separated from zero as $R \to \infty$.
\end{lemma}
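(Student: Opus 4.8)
The plan is to reduce everything to two-sided bounds on the multiplier $\lambda(R)$. By the scale invariance $J[cu]=J[u]$, the value $\lambda(R)$ is nothing but the infimum of $J$ over all admissible nonzero $u$ on $\Omega_R$, and $\|u_R\|_{L^4}^2=\lambda(R)$. Thus the claim is equivalent to showing that $\lambda(R)=\inf_u J[u]$ is bounded above and bounded away from zero, uniformly for large $R$.

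For boundedness from above I would exploit the fact that $\Omega_R$ grows without bound. Since $\Omega$ has nonempty interior, fix $x_0$ and $\rho>0$ with $B(x_0,\rho)\subset\Omega$; then $B(Rx_0,R\rho)\subset\Omega_R$, and as $R\to\infty$ its radius $R\rho\to\infty$, so for all large $R$ it contains a translate of a fixed bump $\ph\in C_0^\infty$, $\ph\not\equiv0$. Because $\ph$ is supported strictly inside $\Omega_R$, its energy $J[\ph]$ is computed over the whole plane and is a finite constant independent of $R$; hence $\lambda(R)\le J[\ph]=C$.

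For the separation from zero I would invoke the scale-invariant Gagliardo--Nirenberg (Ladyzhenskaya) inequality in $\RN^2$,
$$\|v\|_{L^4(\RN^2)}^4\le C_0\,\|v\|_{L^2(\RN^2)}^2\,\|\nabla v\|_{L^2(\RN^2)}^2 \qquad (v\in H^1(\RN^2)),$$
and transfer it to $\Omega_R$. I would extend an admissible $u$ on $\Omega_R$ to some $v\in H^1(\RN^2)$ by reflection across $\partial\Omega_R$; because the domains $\Omega_R$ are dilates of the single piecewise smooth $\Omega$, this extension can be arranged with $\|v\|_{L^2}\le C\|u\|_{L^2}$ and $\|\nabla v\|_{L^2}^2\le C(\|\nabla u\|_{L^2}^2+\|u\|_{L^2}^2)$, the constant $C$ being independent of $R$. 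Inserting this into the inequality above gives
$$\ib|u|^4\,dx\ \le\ C_1\Big(\ib(|\nabla u|^2+|u|^2)\,dx\Big)^2,$$
so that $J[u]\ge C_1^{-1/2}=:c>0$ for every admissible $u$; taking the infimum yields $\lambda(R)\ge c$ uniformly in $R$. Combining the two bounds, $\|u_R\|_{L^4}=\sqrt{\lambda(R)}\in[\sqrt c,\sqrt C]$ for all large $R$.

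The step I expect to be the main obstacle is exactly the uniformity in $R$ of the lower bound. A naive appeal to the embedding $H^1(\Omega_R)\emb L^4(\Omega_R)$ is useless, since its constant degenerates as $R\to\infty$ (the inhomogeneous $H^1$ norm is not scale invariant, as constants on $\Omega_R$ already show that the product form of Gagliardo--Nirenberg fails verbatim under Neumann conditions). The correct route is to pass to the whole plane through a reflection/extension whose norm is controlled by the \emph{scale-invariant} Lipschitz character of $\partial\Omega$, so that the decisive constant $C_1$ does not see $R$; verifying this carefully near the corners of a piecewise smooth $\Omega$ is the one genuinely technical point, and it is presumably the content of the preparatory lemma in Appendix \hyperref[ssec:App A]{A}.
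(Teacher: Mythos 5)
Your proposal is correct and follows essentially the same route as the paper: the upper bound comes from a fixed compactly supported bump whose $J$-value is $R$-independent once its support fits inside $\Omega_R$, and the lower bound comes from an extension to $\RN^2$ whose operator norm is uniformly bounded in $R$, combined with the embedding $W_2^1(\RN^2)\emb L^4(\RN^2)$. The only cosmetic differences are that the paper invokes Stein's extension operator for Lipschitz domains (rather than a hand-built reflection, whose uniformity in $R$ you correctly flag as the technical crux) and obtains the whole-plane embedding from its cube-decomposition inequality \eqref{Lem1ineq} rather than from the Ladyzhenskaya--Gagliardo--Nirenberg inequality.
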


\begin{proof}
First we observe that for $u$ compactly supported $J[u]$ does not depend on $R$. This implies
$\lambda(R)$ is bounded since it is the minimum for problem \eqref{VarProblem}.

Further, let $\Pi_R : W_2^1(\Omega_R) \to W_2^1(\RN^2)$ stand for the extension operator
(see \cite[Chapter 6, Section 3]{St}). The sequence of norms $||\Pi_R||$ is bounded by some
constant as $R \to \infty$. Since the embedding $W_2^1(\RN^2) \emb L^4(\RN^2)$ is bounded
(this follows, for instance, from \eqref{Lem1ineq}), we obtain
$$||u_R||_{W_2^1(\Omega_R)} \ge C||\Pi_R(u_R)||_{W_2^1(\RN^2)} \ge C||\Pi_R(u_R)||_{L^4(\RN^2)}
\ge C||u_R||_{L^4(\Omega_R)},$$
so $\lambda = J[u_R] = ||u_R||^2_{W_2^1(\Omega_R)}/||u_R||^2_{L^4(\Omega_R)} \ge C$ and we are done.
\end{proof}

\begin{rem}
\label{ResForCompact}
The functional in the problem \eqref{ConMinProblem} is coercive and weakly lower semi-continuous
in $W_2^1(\Omega_R)$. If $\Omega$ is bounded the embedding
$W_2^1(\Omega_R)\emb L_4(\Omega_R)$ is compact so the set of functions satisfying the condition
in the problem \eqref{ConMinProblem} is weakly closed in $W_2^1(\Omega_R)$.
This implies (see \cite[Theorems 24.11 and 26.8]{FK}) that the minimum in \eqref{ConMinProblem}
is attained and the reasoning above can be applied.
\end{rem}

We construct first the simplest families of solutions in $\RN^2$ which have a rectangular and
triangular lattices of periods.

\subsection{Solutions with rectangular symmetry}
\label{ssec:Rectangle}
Suppose $\Omega_R$ is a rectangle $(0, R)\times (0, aR)$ where $a$ is a given number. By Remark
\ref{ResForCompact} we obtain a positive solution $u_R$ to the problem \eqref{EulerEq}
that minimizes the functional \eqref{VarProblem}.

By the Concentration Theorem \ref{Concentration_thr} in Subsection \ref{ssec:Conc_thm}, $u_R$
has exactly one concentration sequence $x_R$ as $R \to \infty$. We consider three
possibilities listed below.
\begin{enumerate}
\item The distance between concentration sequence and the sets of vertices of respective rectangles
is bounded.

\item There is a subsequence with unbounded distance from vertices but bounded distance from
$\partial \Omega_R$.

\item There is a subsequence with unbounded distance from $\partial \Omega_R$.
\end{enumerate}
By Remark \ref{BoundShiftRem} (Appendix \hyperref[ssec:App A]{A}) we can assume $x_R$ is a sequence
of vertices in the first case and $x_R \in \partial\Omega_R$ in the second case.
By Remark \ref{RhoDecreaseRem} (Appendix \hyperref[ssec:App A]{A}) in the case 2 we can choose
$\rho^{\prime}(R) \to \infty$ such that $\rho^{\prime}$ is smaller than the distance
from $x_R$ to the vertices. Similarly, in the case 3 we can assume $\rho^{\prime} <
\mathrm{dist}\left(x_R, \partial \Omega_R\right)$. For $R$ large enough the intersection
$B(x_R, \rho^{\prime}(R)) \cap \Omega_R$ is a quarter-disk in the first case, half-disk in
the second and full disk in the third.

Now we claim that the case 3 is impossible. Indeed, choose $\eps>0$. Let $\sigma$ be a cut-off
function from Lemma \ref{cut_off_lemma} in Subsection \ref{ssec:general_theory}. We
consider the component $h_R$ of $\sigma u_R$ which contains $B(x_R, \rho)$. By Lemma
\ref{cut_off_lemma} $J[h_R] \le J[u_R] + o_\eps(1)$ for $R$ large enough.

Now let  $\tilde{h}_R$ be the symmetric rearrangement of $h_R$. It is well known that $||h_R||_
{W_2^1} \ge ||\tilde{h}_R||_{W_2^1}$ and $||h_R||_{L^4} = ||\tilde{h}_R||_{L^4}$
(see \cite[Section II.9, Corollary 2.35]{Ka}). Therefore, $J[h_R] \ge J[\tilde{h}_R]$. We then
consider a trial function $v_R$ which is a quarter-disk of $\tilde{h}_R$ placed
in the corner of the rectangle.

\begin{figure}[h]
\begin{minipage}[t]{0.49\textwidth}
\includegraphics[scale=1.5]{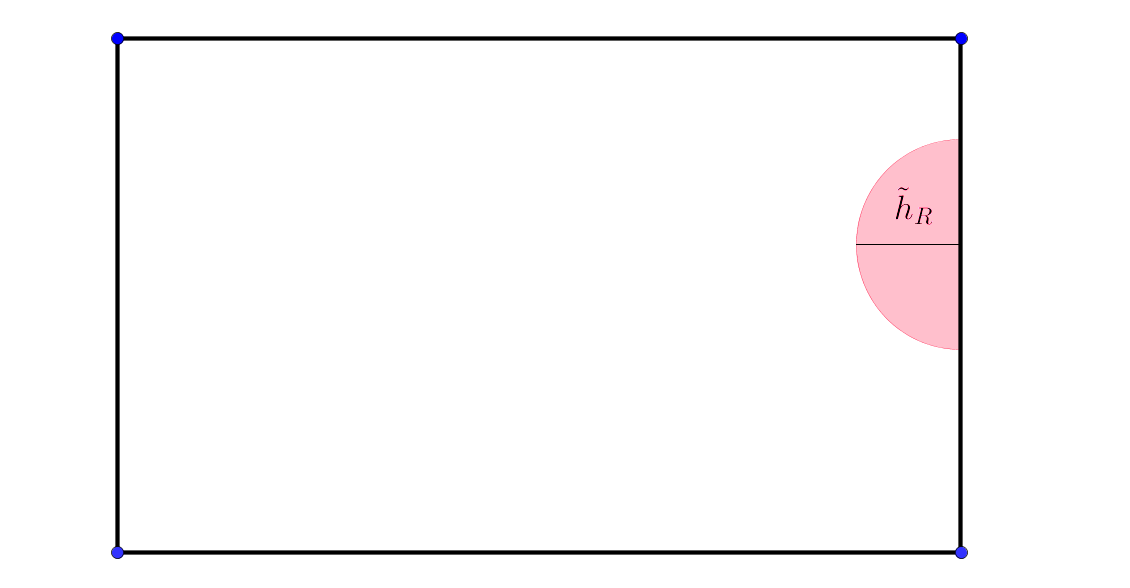}
\end{minipage}
\hfill
\begin{minipage}[t]{0.49\textwidth}
\includegraphics[scale=1.5]{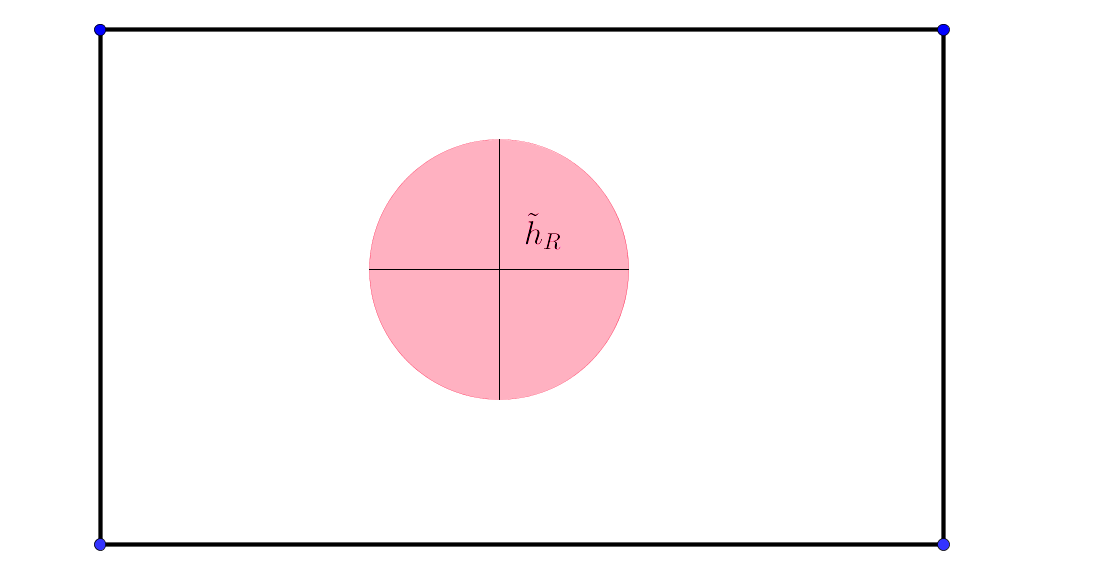}
\end{minipage}
\end{figure}

\includegraphics[scale=2.5]{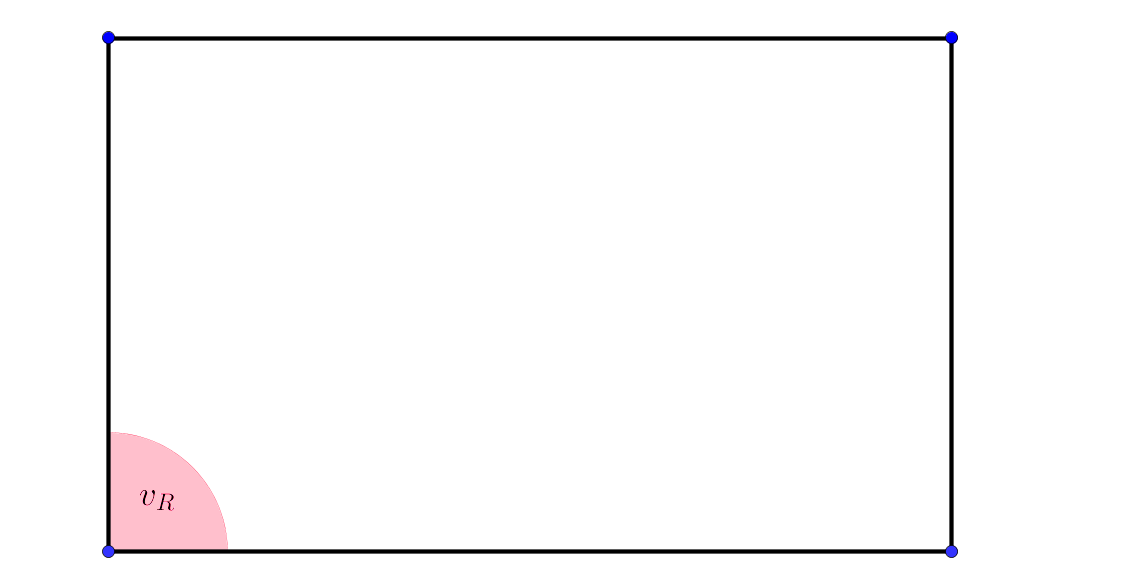}

It is easy to see $J[\tilde{h}_R] = 2J[v_R]$. Therefore, $J[v_R] \le J[u_R]/2 + o_\eps(1)$. This contradicts the minimality of $J[u_R]$ if $R$ is large enough and $\eps$ small enough,
and the claim follows.

The case 2 is likewise impossible.

We conclude that $u_R$ is concentrated in the corner.

Using even reflection, we extend $u_R$ to the rectangle
$(-R,R)\times(-aR,aR)$ and then extend it to $\RN^2$ periodically. Thus, we obtain solutions of \eqref{MainEq} in $\RN^2$ with $(2R, 2aR)$ rectangular lattice of periods
(see Figure~\ref{rectangular lattice}).

Therefore, for every $a \in \RN_+$ for sufficiently large $R$ our periodic solutions are non-trivial and distinct.

\begin{figure}[h]
\begin{minipage}[b]{0.49\textwidth}
\centering
\includegraphics[width=6cm]{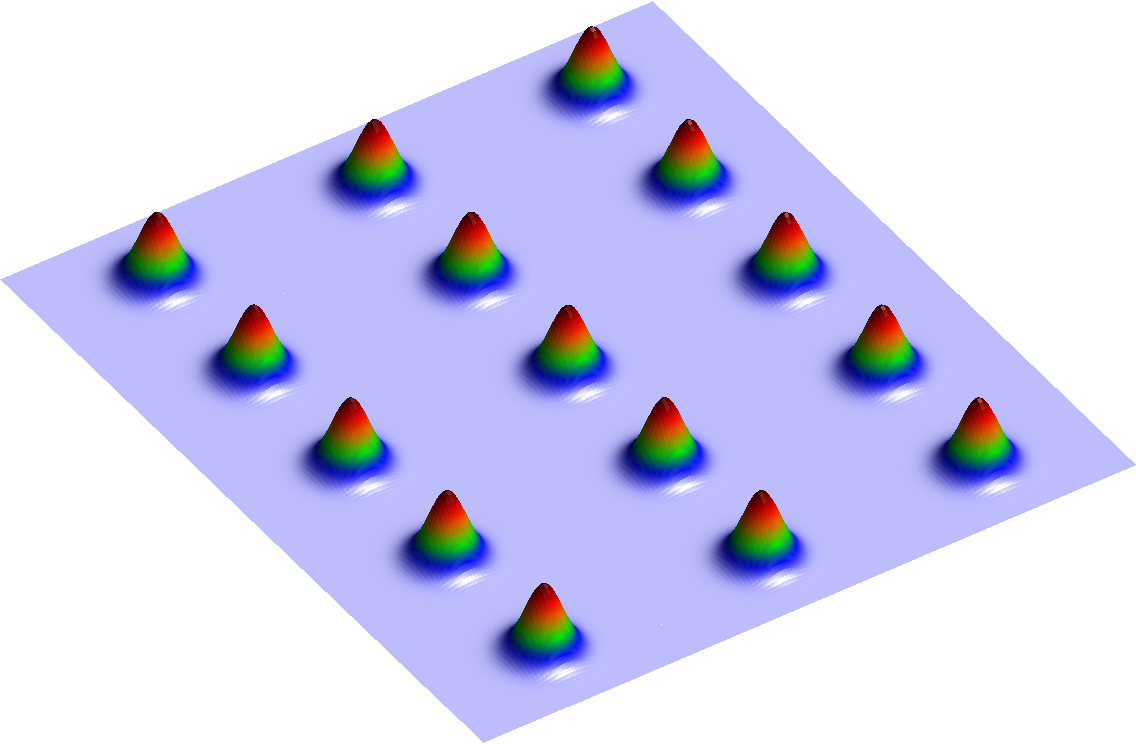}
\end{minipage}
\hfill
\begin{minipage}[b]{0.49\textwidth}
\centering
\includegraphics[width=5cm]{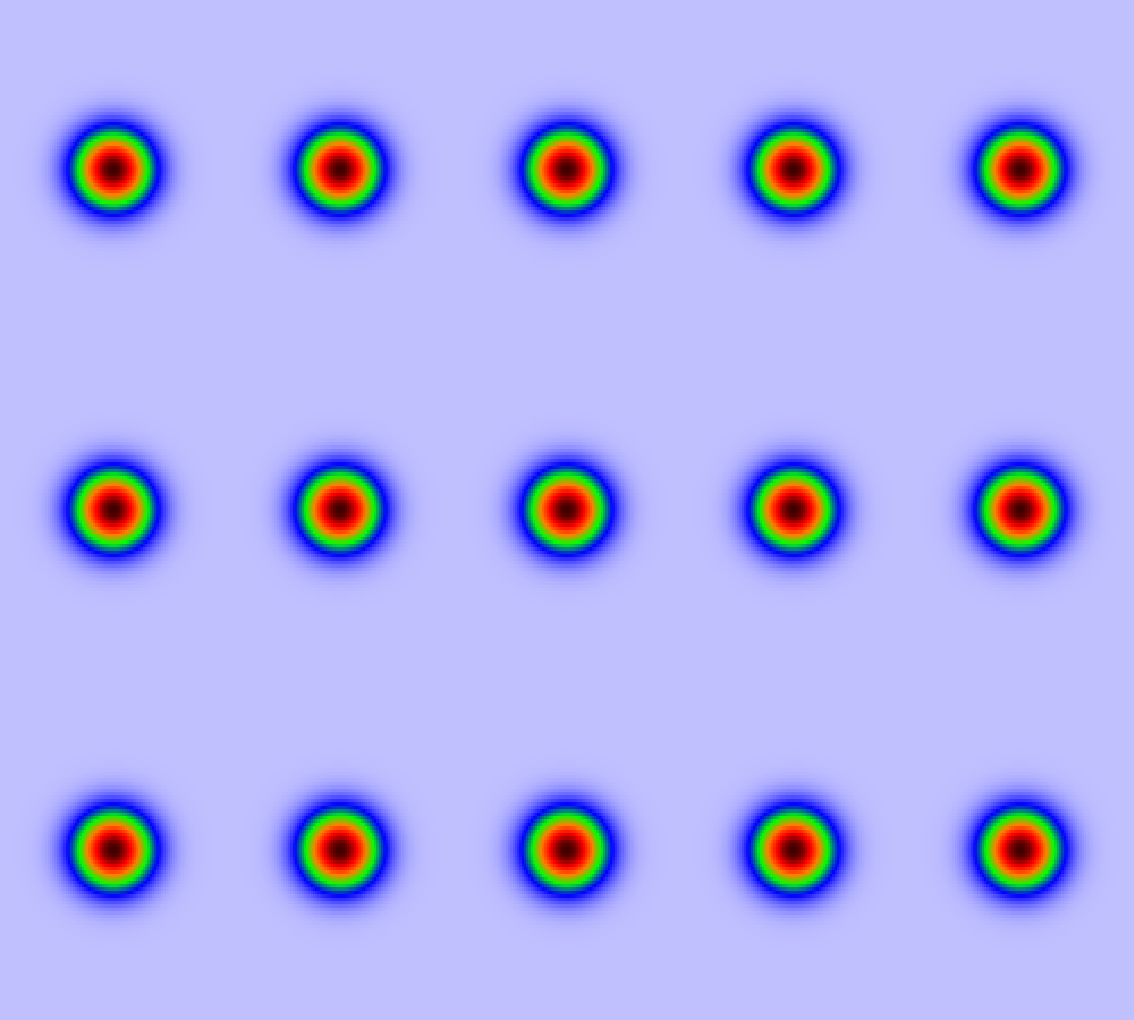}
\end{minipage}
\caption{}
\label{rectangular lattice}
\end{figure}

\begin{cor}
Given $a\ge 1$ and $N\in\NN$ there exists $R^*(a,N) \in \RN$ such that for $R>R^*(a,N)$ the equation \eqref{MainEq} in $\RN^2$ has
at least $N$ different nontrivial positive $(2R,2aR)$-periodic solutions.
\end{cor}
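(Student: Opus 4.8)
The plan is to produce the $N$ solutions not by varying the aspect ratio but by tiling the plane with rescaled copies of the single rectangular solution already built in Subsection \ref{ssec:Rectangle}. The key observation is that any function periodic with period $(2L, 2aL)$ is automatically periodic with period $(2R, 2aR)$ whenever $R/L$ is a positive integer, since then $2R$ is an integer multiple of $2L$, and likewise in the second coordinate. Hence, for each $k = 1, \ldots, N$, I would run the construction of Subsection \ref{ssec:Rectangle} on the rectangle $(0, R/k) \times (0, aR/k)$ — which has the same aspect ratio $a$ — obtaining a positive nontrivial solution $u_k$ of \eqref{MainEq} that is $(2R/k, 2aR/k)$-periodic, and therefore also $(2R, 2aR)$-periodic.

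First I would fix the threshold. Let $R^*_0(a)$ be the value of $R$ beyond which the argument of Subsection \ref{ssec:Rectangle} applies (concentration in a corner, followed by even reflection and periodic extension). Since the sub-rectangle $(0, R/k) \times (0, aR/k)$ must itself be large, I require $R/k > R^*_0(a)$ for every $k \le N$; as $k \le N$, it suffices to take $R > N\, R^*_0(a) =: R^*(a, N)$. For such $R$ each of the $N$ functions $u_1, \ldots, u_N$ is well defined, positive, nontrivial and $(2R, 2aR)$-periodic.

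It remains to show that $u_1, \ldots, u_N$ are pairwise distinct, and here I would use the peak structure inherited from the Concentration Theorem \ref{Concentration_thr}. By construction $u_k$ concentrates at exactly one point per fundamental cell of size $(2R/k) \times (2aR/k)$, these points forming the lattice $\frac{2R}{k}\ZN \times \frac{2aR}{k}\ZN$ after even reflection and periodic extension. Consequently, within one $(2R) \times (2aR)$ period cell the function $u_k$ exhibits exactly $k^2$ peaks. Since $k^2$ is strictly increasing in $k$, the functions $u_k$ have pairwise different minimal periods and are in particular pairwise distinct, which yields the $N$ required solutions.

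The main obstacle is precisely this distinctness step: one must be sure that the minimal period of $u_k$ is genuinely $(2R/k, 2aR/k)$ and not a proper divisor of it, for otherwise two different indices could produce the same solution. This is exactly what the uniqueness of the concentration sequence in Theorem \ref{Concentration_thr} guarantees — a single bump per fundamental cell forbids any finer periodicity — so the peak count $k^2$ is honest and the solutions are genuinely distinct.
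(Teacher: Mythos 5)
Your proposal is correct and is essentially the paper's own proof: the paper treats $N=2$ by running the variational problem \eqref{VarProblem} on $(0,R)\times(0,aR)$ and on $(0,R/2)\times(0,aR/2)$, notes both extensions are nontrivial, positive, $(2R,2aR)$-periodic and distinct, and handles general $N$ ``similarly'' --- exactly your rescaled-rectangle construction with threshold $R^*(a,N)$ of the same form. Your peak-counting argument for distinctness (one concentration point per fundamental cell, hence $k^2$ bumps per $(2R)\times(2aR)$ cell) merely makes explicit what the paper leaves implicit in the phrase ``by the previous argument this gives different solutions.''
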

\begin{proof}
We proved that there exists $R_0$ such that $u_R$ is concentrated in the corner for $R > R_0$. For $R>2R_0$ we can consider the problem \eqref{VarProblem}
in $\Omega_R=(0,R)\times(0,aR)$ and in $\Omega_R=(0,R/2)\times(0,aR/2)$.
By the previous argument this gives different solutions, both are nontrivial, positive and $(2R,2aR)$-periodic.
The case of arbitrary $N$ is managed similarly.
\end{proof}

\subsection{Solutions with triangular symmetry}
\label{ssec:Triangle}
Suppose now $\Omega$ is the equilateral triangle with sides of length $1$. By the same argument as in Subsection \hyperref[ssec:Rectangle]{2.2} the minimizer $u_R$
of the variational problem \eqref{VarProblem} is a strong solution of \eqref{EulerEq} and is concentrated in a corner of triangle for $R$ sufficiently large.

\includegraphics[scale=2.5]{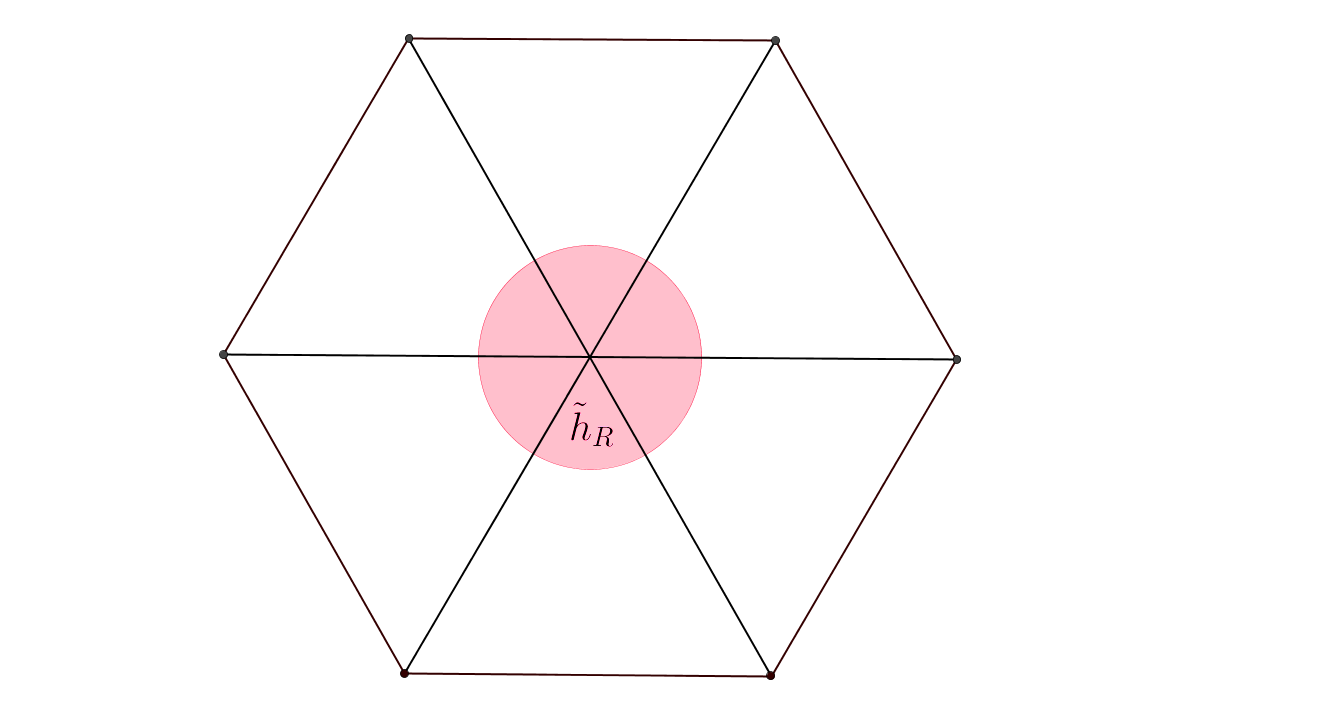}

Using even reflection we extend $u_R$ to the hexagon with side $R$ and then extend it periodically to $\RN^2$. We obtain a solution of \eqref{MainEq} with triangular
lattice with side $2R$ (see Figure~\ref{triangular lattice}).

\begin{figure}[h]
\begin{minipage}[b]{0.49\textwidth}
\centering
\includegraphics[width=6cm]{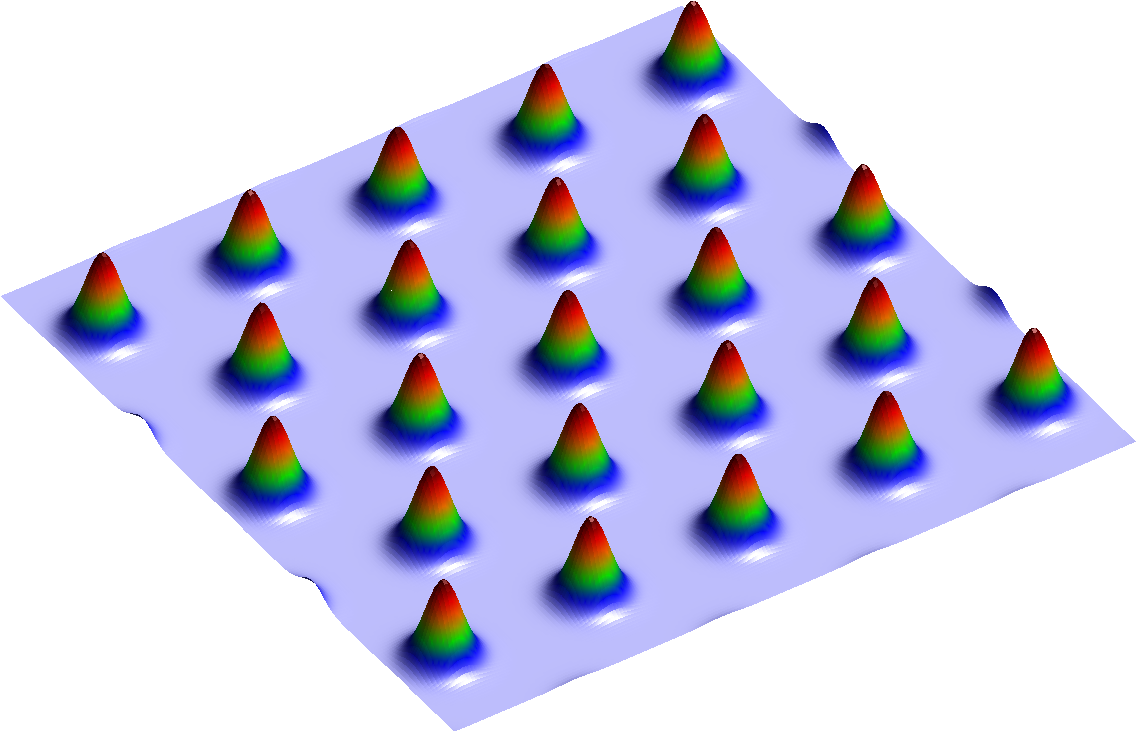}
\end{minipage}
\hfill
\begin{minipage}[b]{0.49\textwidth}
\centering
\includegraphics[width=5cm]{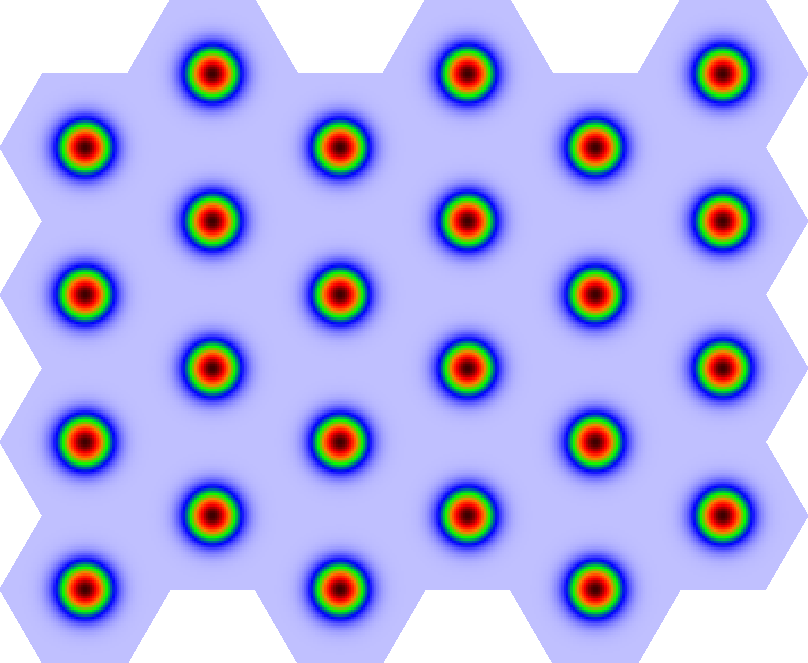}
\end{minipage}
\caption{}
\label{triangular lattice}
\end{figure}

Exactly as in the previous case the Concentration Theorem \ref{Concentration_thr} gives us the following corollary.

\begin{cor}
Given $N \in \NN$ there exists $R^*(N) \in \RN$ such that for $R>R^*(N)$ the equation \eqref{MainEq} in $\RN^2$ has
at least $N$ different nontrivial positive $R$-triangular-periodic solutions.
\end{cor}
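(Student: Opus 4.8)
The plan is to mirror the argument just used for the rectangular corollary, the only new ingredient being the concentration-in-a-corner result obtained for the equilateral triangle in Subsection~\ref{ssec:Triangle}. That result produces, for each sufficiently large scaling, a minimizer $u_R$ of \eqref{VarProblem} on the triangle $\Omega_R$ of side $R$ which concentrates in a single corner, and whose even reflection across the sides extends (six copies filling a hexagon, then periodically to the whole plane) to a genuine nontrivial positive $R$-triangular-periodic solution of \eqref{MainEq}. I would take this construction as a black box and apply it to a nested family of shrinking triangles, exactly as in Subsection~\hyperref[ssec:Rectangle]{2.2}.

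First I would fix $R_0$ so that the concentration conclusion of Subsection~\ref{ssec:Triangle} holds for every equilateral triangle of side larger than $R_0$; such an $R_0$ exists by the Concentration Theorem~\ref{Concentration_thr}. Given $N$, I then set $R^*(N) := N R_0$, and for $R > R^*(N)$ I run the construction on the triangles with sides $R, R/2, \dots, R/N$. For each $k \in \{1,\dots,N\}$ the side $R/k$ exceeds $R/N > R_0$, so the corresponding minimizer concentrates in a corner and yields a solution $w_k$ whose bumps sit on a triangular lattice of scale proportional to $R/k$. The lattice produced for $k=1$ is a sublattice of each finer lattice, so each $w_k$, being invariant under its own (finer) lattice, is in particular invariant under the coarse one; hence every $w_k$ is $R$-triangular-periodic and all $N$ solutions share the common period lattice demanded by the statement.

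It remains to show that $w_1,\dots,w_N$ are pairwise distinct, and this is the single point that deserves care, since in principle a solution built from a small triangle could have a coarser period than expected. I would argue not through minimal periods but through the concentration pattern itself: by the Concentration Theorem~\ref{Concentration_thr} the minimizer on a side-$(R/k)$ triangle has exactly one concentration point, so after reflection $w_k$ carries its bumps on a triangular lattice of spacing $\propto R/k$, i.e. with bump density $\propto (k/R)^2$ per unit area. Distinct values of $k$ force distinct densities, so the $w_k$ cannot coincide. Each $w_k$ is nontrivial and positive by the construction of Subsection~\ref{ssec:Triangle}. Thus for $R > R^*(N) = N R_0$ we obtain at least $N$ different nontrivial positive $R$-triangular-periodic solutions of \eqref{MainEq} in $\RN^2$, which is the assertion of the corollary.
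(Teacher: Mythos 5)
Your proposal is correct and follows the paper's own argument essentially verbatim: the paper likewise proves this corollary by running the variational construction on a nested family of scaled triangles (its model proof in the rectangular case uses $\Omega_R$ and the half-size domain, with ``the case of arbitrary $N$ managed similarly''), all solutions sharing the coarsest period lattice, and distinctness coming from the different spacings of their concentration lattices. The only caveat is a factor-of-$2$ bookkeeping issue inherited from the paper's own loose terminology (a side-$L$ triangle yields a lattice of side $2L$), so if ``$R$-triangular-periodic'' is read literally as periodicity with respect to the lattice of side $R$, you should run the construction on triangles of side $R/(2k)$ rather than $R/k$, since for odd $k$ the side-$2R/k$ lattice does not contain the side-$R$ lattice.
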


\subsection{Solutions with hexagonal symmetry}
For the case of hexagonal lattice of periods a little more intricate argument is required.
Let $\Omega$ be a triangle with angles $\pi/2$, $\pi/3$, $\pi/6$ and hypotenuse of length $1$.
Denote $\Omega_R$ by $XYZ$ where $\angle X = \pi/3$, $\angle Y = \pi/6$, $\angle Z = \pi/2$ and
denote sector $\Omega_R \cap B(Y, R/2)$ by $A_R$. If we consider the usual problem
\eqref{ConMinProblem} in $\Omega_R$ the minimizer is concentrated near $Y$ as it is the corner
with the least angle. After extending it to $\RN^2$ we get a solution with a triangular
lattice of periods similar to Subsection \hyperref[ssec:Triangle]{2.3}. To prevent this we study
the variational problem \eqref{ConMinProblem} with an additional condition
\begin{equation}
\label{HexagonCondition}
\int\limits_{A_R}|u|^4\, dx \le 1/4.
\end{equation}

Note that we can still apply some of the reasoning from Subsection
\hyperref[ssec:General_results]{2.1} and deduce that the minimum is attained and the solution
is non-negative. We now prove a variant of the Concentration Theorem for this case.

\begin{customthm}{2'}
\label{conc_theorem_hexagonal}
Let $u_R$ be a sequence of minimizers for the problem \eqref{ConMinProblem} with the additional
condition \eqref{HexagonCondition}. Then $u_R$ has exactly one concentration sequence of
weight $1$ concentrated near $X$ (see Figure~\ref{Fig3}).
\end{customthm}

\begin{proof}
Suppose there are two concentration sequences $x_R$ and $y_R$. We use Remark \ref{cut-off-rem} (Appendix \hyperref[ssec:App A]{A}) to construct a cut-off function which isolates them.
Let $\sigma_1$ and $\sigma_2$ be the components of $\sigma$ with $x_R \in \supp\,\sigma_1$ and $y_R\in\supp\,\sigma_2$ respectively. Assume first that both of the components $\sigma_1$
and $\sigma_2$ lie within $A_R$. In that case functions $v_R$ constructed in the proof of Lemma \ref{less_than_two_seq_lemma} still satisfy condition \eqref{HexagonCondition} which
contradicts the minimality of $u_R$. The same argument can be applied if both components lie in $\Omega_R \setminus A_R$. Further, suppose the support of one of the components, say,
$\sigma_1$, intersects $(\partial A_R) \cap \Omega_R$. By the construction of $\sigma$ the diameter of $\sigma_1$ is not more than $2(5\rho + \rho^{\prime}(R))/6$. The width of the
annulus around $\sigma_1$ where $\sigma$ equals zero is $4(\rho^{\prime}(R) - \rho)/6$. Therefore, for $R$ large enough we can move the ''bubble'' $\sigma_1u_R$ fully into
$\Omega_R \setminus A_R$ which eliminates this case. Thus, we can presume $\supp\, \sigma_1 \subset \Omega_R \setminus A_R$ and $\supp\, \sigma_2 \subset A_R$.

The proof of the Concentration Theorem \ref{Concentration_thr} shows that the combined weight of the concentration sequences is $1$. After that, the reasoning from Subsection
\hyperref[ssec:Rectangle]{2.2} applied for each sequence shows that we can assume $x_R = X$ and $y_R = Y$. Next we are going to show that it is more profitable to have all of
the weight concentrated near $X$.

Remark \ref{RhoDecreaseRem} allows us to take radii $\rho$ and $\rho^{\prime}(R)$ equal for $x_R$ and $y_R$. Using symmetrization if needed we can assume that
$\sigma_1u_R(x) = h_1(|x_R-x|)$, $\sigma_2u_R(x) = h_2(|y_R-x|)$ where $h_1(t)$ and $h_2(t)$ are decreasing functions vanishing for $t > \rho^{\prime}(R)$.

Consider the function
$$
g(x) = \frac{||\sigma_1u_R||_{L_4}}{||\sigma_2u_R||_{L_4}}\cdot\Big(\frac{\pi/6}{\pi/3}\Big)^{1/4}\cdot h_2(|x_R-x|).
$$
It is easy to see that $||g||_{L_4} = ||\sigma_1u_R||_{L_4}$. Therefore, replacing $\sigma_1u_R$ with $g$ preserves the $L_4$ norm of the function. Since $u_R$ is a minimizer it
follows from \eqref{cut-offWp-norm} that
$$
||\sigma_1u_R||_{W_2^1} \le ||g||_{W_2^1} - o_R(1) = \frac{||\sigma_1u_R||_{L_4}}{||\sigma_2u_R||_{L_4}}\cdot 2^{1/4}\cdot ||\sigma_2u_R||_{W_2^1} - o_R(1).
$$
Combining \eqref{HexagonCondition} and \eqref{cut-offLq-norm} gives us
$$||\sigma_2u_R||_{L_4}^4 \le 1/4; \qquad ||\sigma_1u_R||_{L_4}^4 \ge 3/4 - o_\eps(1).$$
Hence
\begin{multline*}
\frac{||\sigma_1u_R||_{W_2^1}^2}{||\sigma_1u_R||_{L_4}^4} \le \frac{||\sigma_1u_R||_{L_4}^2}{||\sigma_2u_R||_{L_4}^2}\cdot 2^{1/2}\cdot
\frac{||\sigma_2u_R||_{W_2^1}^2}{||\sigma_1u_R||_{L_4}^4} - o_R(1) \\
\le \Big(\frac{2\cdot 1/4}{3/4-o_\eps(1)}\Big)^{1/2}\cdot \frac{||\sigma_2u_R||_{W_2^1}^2}{||\sigma_2u_R||_{L_4}^4} - o_R(1) < \frac{||\sigma_2u_R||_{W_2^1}^2}{||\sigma_2u_R||_{L_4}^4}
\end{multline*}
for $R$ large enough and $\eps$ small enough.

Finally, this shows the condition \eqref{hump_destruction_inequality} is satisfied for functions $b=\sigma_2u_R$, $c=\sigma_1u_R$ and $a=(\sigma - \sigma_1 - \sigma_2)u_R$. Proposition
\ref{hump_destruction_lemma} then gives us a function $U$ satisfying the condition \eqref{HexagonCondition} which contradicts the minimality of $u_R$.

Thus, there is at most one concentration sequence. As it was mentioned, the combined weight of concentration sequences is $1$ and the statement follows.
\end{proof}

\begin{figure}[h]
\begin{minipage}[b]{0.49\textwidth}
\centering
\includegraphics[width=6cm]{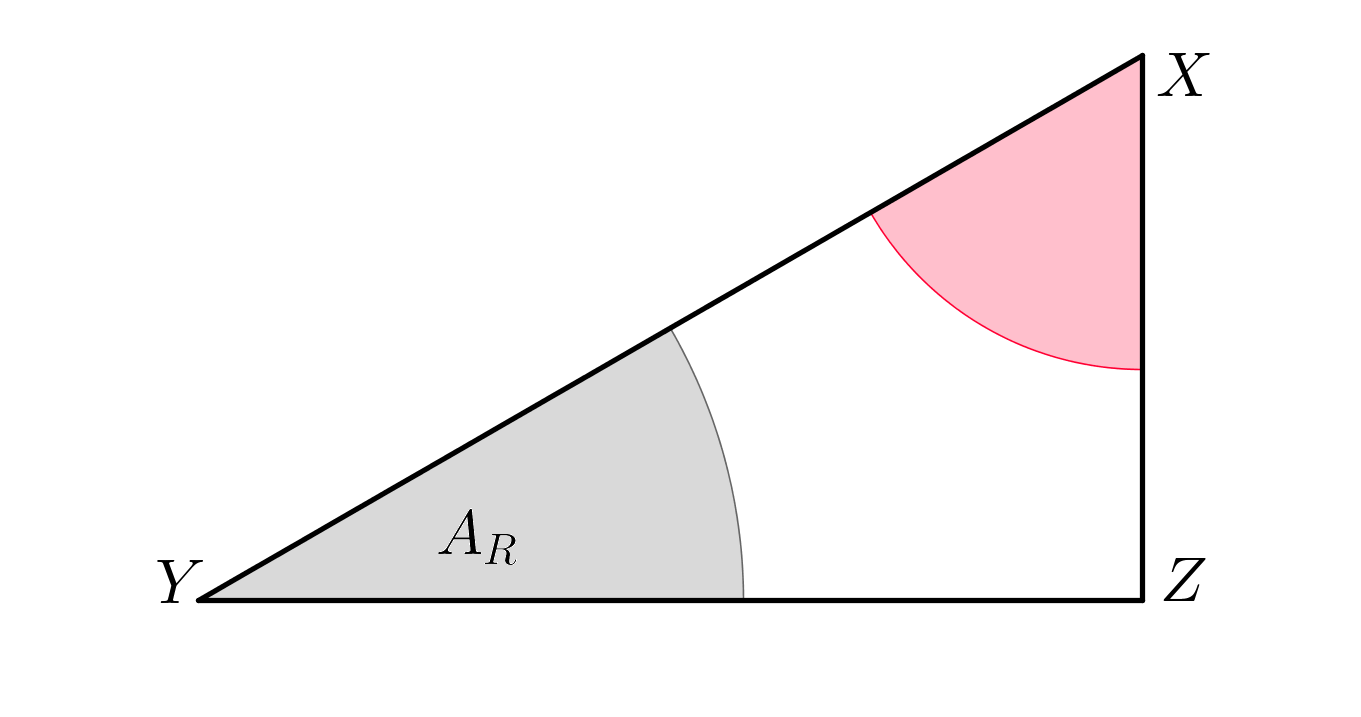}
\caption{}
\label{Fig3}
\end{minipage}
\hfill
\begin{minipage}[b]{0.49\textwidth}
\centering
\includegraphics[width=6cm]{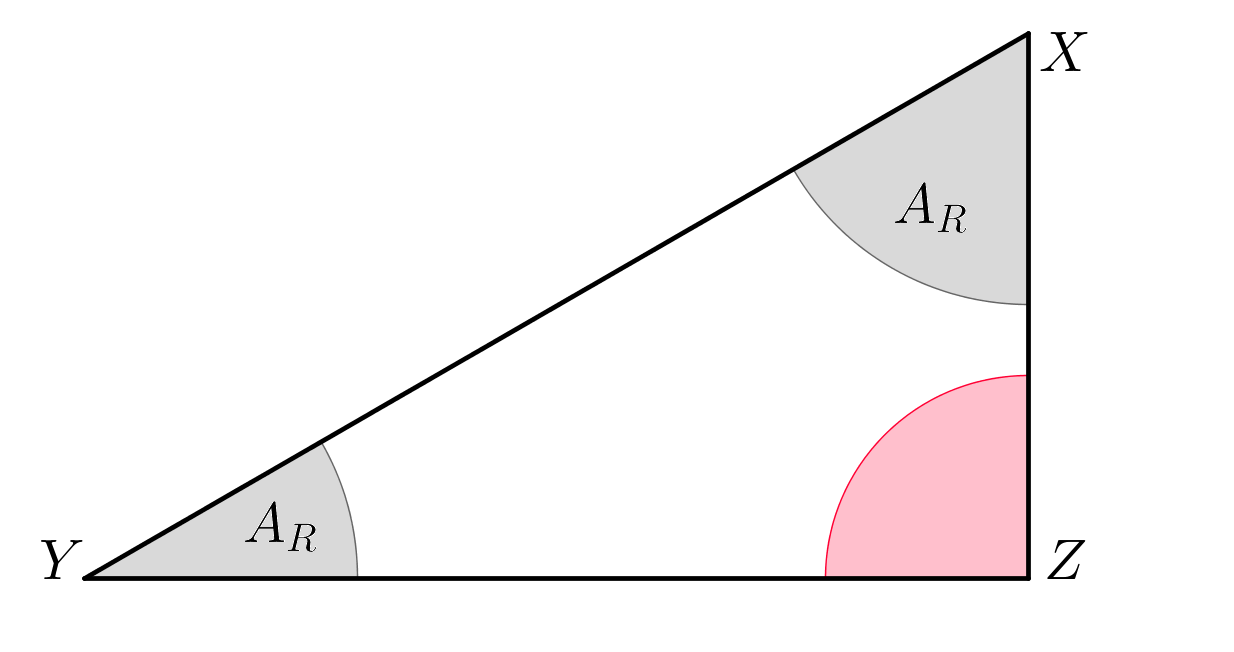}
\caption{}
\label{Fig4}
\end{minipage}
\end{figure}

Theorem \ref{conc_theorem_hexagonal} implies that
$$\int\limits_{A_R}|u_R|^4\, dx \to 0$$
as $R \to \infty$ since $u_R$ is concentrated around $X$ with weight $1$. Therefore, for $R$ large enough the restriction in \eqref{HexagonCondition} is non-active and the
Euler-Lagrange equation
\begin{equation*}
-\Delta u_R + u_R = \lambda u_R^3 \quad \mbox{in} \quad \Omega_R.
\end{equation*}
is derived the usual way.

Similar to Subsections \hyperref[ssec:General_results]{2.1} we multiply $u_R$ by $\sqrt{\lambda}$ and obtain a solution of \eqref{MainEq} concentrated near $X$. Finally, we use even
reflection to extend it to a hexagon and then extend it to $\RN^2$ periodically. The constructed solution has a hexagonal lattice of periods (see Figure~\ref{hexagonal lattice}).

\begin{figure}[h]
\begin{minipage}[b]{0.49\textwidth}
\centering
\includegraphics[width=6cm]{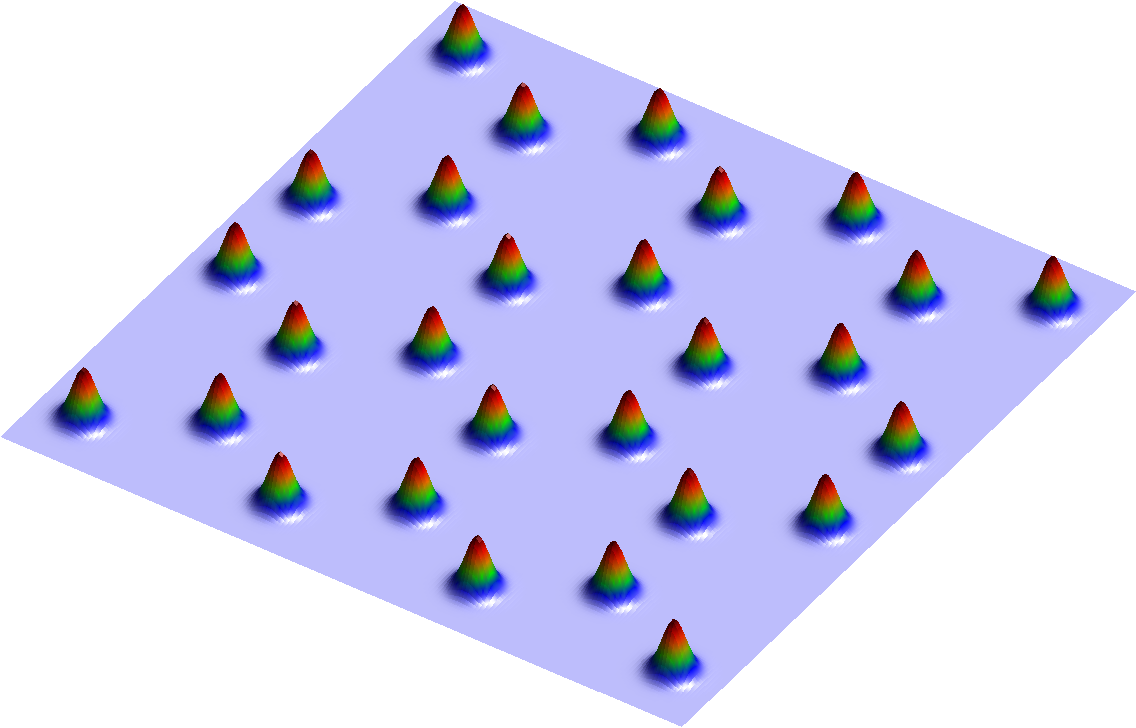}
\end{minipage}
\hfill
\begin{minipage}[b]{0.49\textwidth}
\centering
\includegraphics[width=5cm]{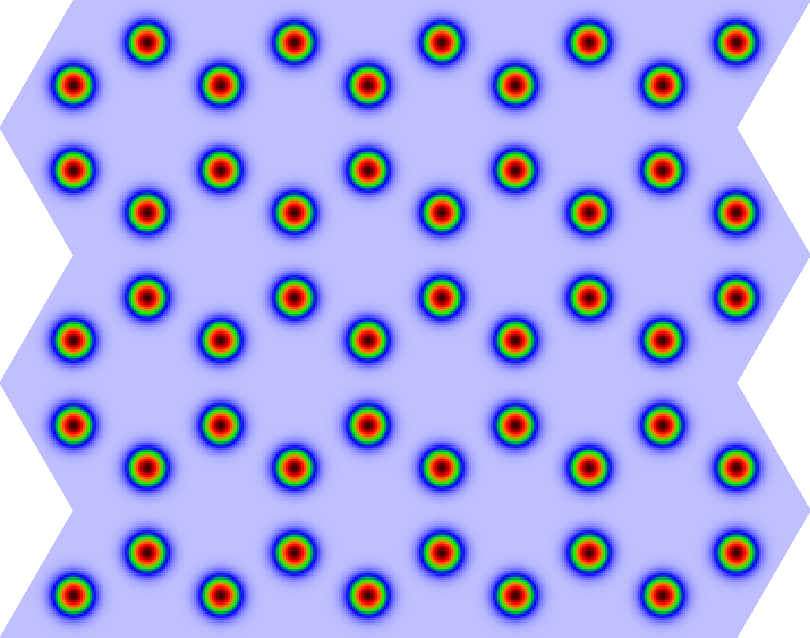}
\end{minipage}
\caption{}
\label{hexagonal lattice}
\end{figure}

\begin{rem}
Using the same technique with two constrictions (see Figure~\ref{Fig4})
\begin{equation*}
 \int\limits_{B(Y, R/4)}|u|^4\, dx \le 1/8, \; \; \int\limits_{B(X, R/4)}|u|^4\, dx \le 1/8
\end{equation*}
we can force the solution to concentrate near $Z$. This gives us yet another lattice, depicted below (see Figure~\ref{hexagonal lattice 2}).
\end{rem}

\begin{figure}[h]
\begin{minipage}[b]{0.49\textwidth}
\centering
\includegraphics[width=6cm]{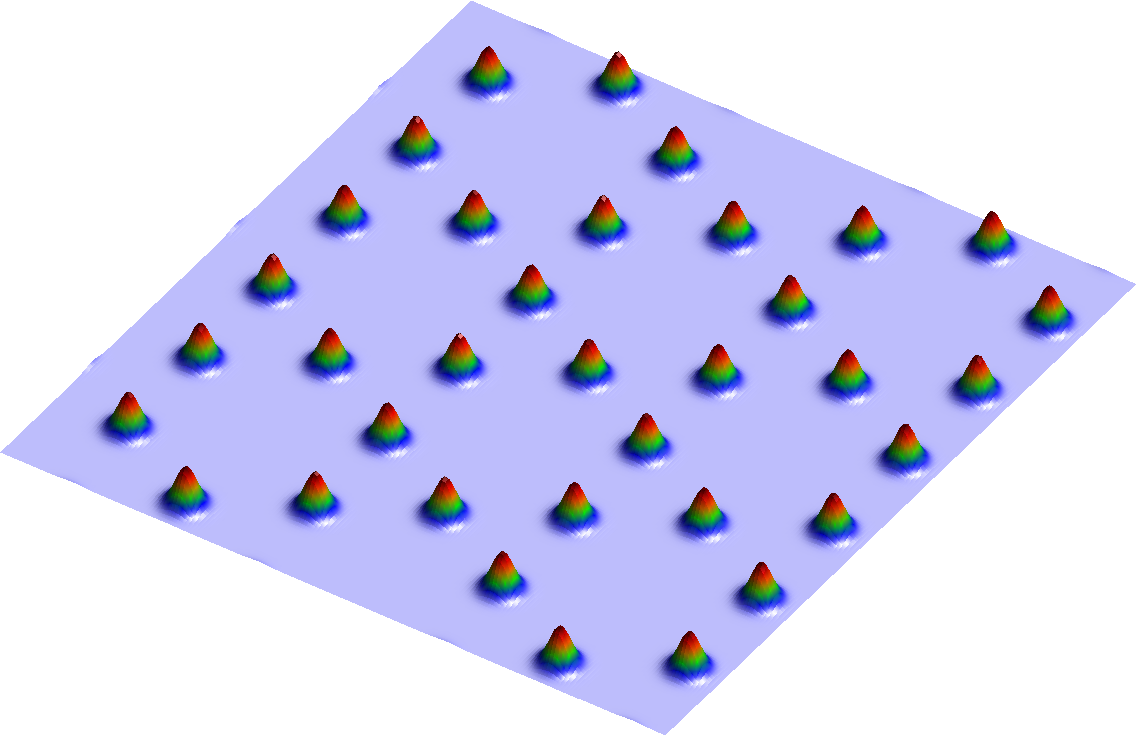}
\end{minipage}
\hfill
\begin{minipage}[b]{0.49\textwidth}
\centering
\includegraphics[width=5cm]{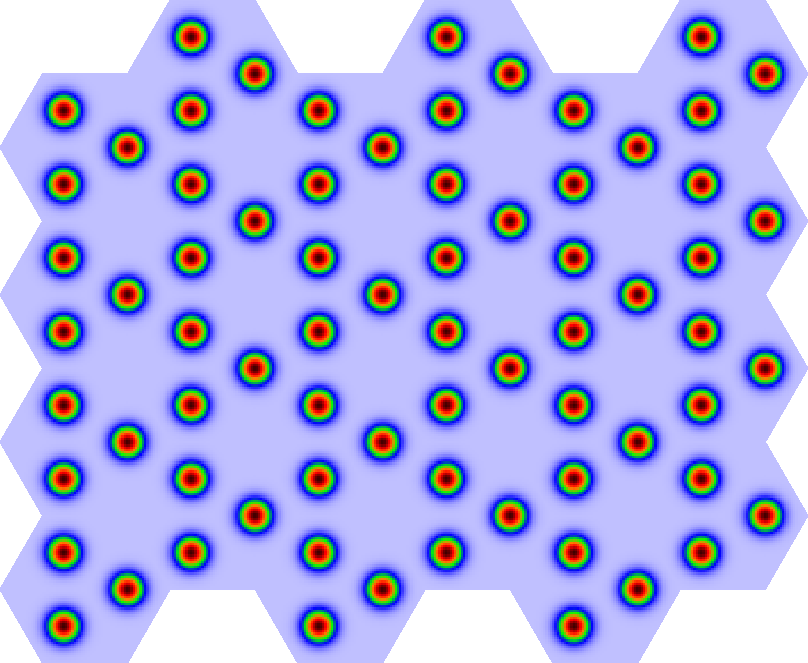}
\end{minipage}
\caption{}
\label{hexagonal lattice 2}
\end{figure}

\subsection{Breather type solutions}
\label{ssec:Breather}
Here we construct a family of solutions periodic in one variable and rapidly decaying in another. To do this we consider problem \eqref{ConMinProblem} in the strip
$\Omega_R=(0,R)\times\mathbb{R}$. Since the embedding of $W_2^1(\Omega_R)$ into $L_4(\Omega_R)$ is not compact we cannot apply the argument from Subsection
\hyperref[ssec:General_results]{2.1} directly and need to refine it.

Let $v_n$ be a minimization sequence for the problem \eqref{ConMinProblem}:
$$||v_n||_{L^4} = 1; \qquad ||v_n||_{W_2^1} \searrow \min \quad \mbox{as} \ n \to \infty$$
Extending the functions to $\RN^2$ and using Lemma \ref{no_vanishing_lemma} shows there is no vanishing. Therefore, $v_n$ must satisfy the concentration condition.

Notice that the Steiner symmetrization with respect to $y$ and monotonous rearrangement with respect to $x$ do not increase the functional \eqref{VarProblem}
(see, e.g., \cite[II.7]{Ka}). Taking into account that $J[|v|] = J[v]$, we can presume that $v_n$ are nonnegative, symmetrically decreasing in $y$ and monotonous
(without loss of generality, decreasing) in $x$. Then they can only concentrate around $(0, 0)$.

Next, we extract a subsequence $v_{n_k}$ which converges weakly in $W_2^1(\Omega_R)$ to some function $u_R$. Let $T > 0$ and consider $\Omega_{R,T} = (0, R) \times (-T, T)$.
The sequence $v_{n_k}$ (restricted to $\Omega_{R, T}$) converges weakly in $W^1_2(\Omega_{R, T})$. Since the embedding of $W^1_2(\Omega_{R, T})$ into $L_4(\Omega_{R, T})$ is
compact it converges strongly in $L_4(\Omega_{R, T})$. Choose $\eps > 0$. The sequence $v_{n_k}$ is concentrated around $(0, 0)$ which shows there is $T > 0$ such that
$$
||v_{n_k}||_{L_4(\Omega_{R})} \ge ||v_{n_k}||_{L_4(\Omega_{R, T})} > (1-\eps)||v_{n_k}||_{L_4(\Omega_{R})}.
$$
Therefore,
$$
\liminf ||v_{n_k}||_{L_4(\Omega_{R})} \ge ||u_R||_{L_4(\Omega_{R, T})} \ge \limsup (1-\eps)||v_{n_k}||_{L_4(\Omega_{R})}.
$$
\begin{figure}
\centering
\includegraphics[width=0.45\textwidth]{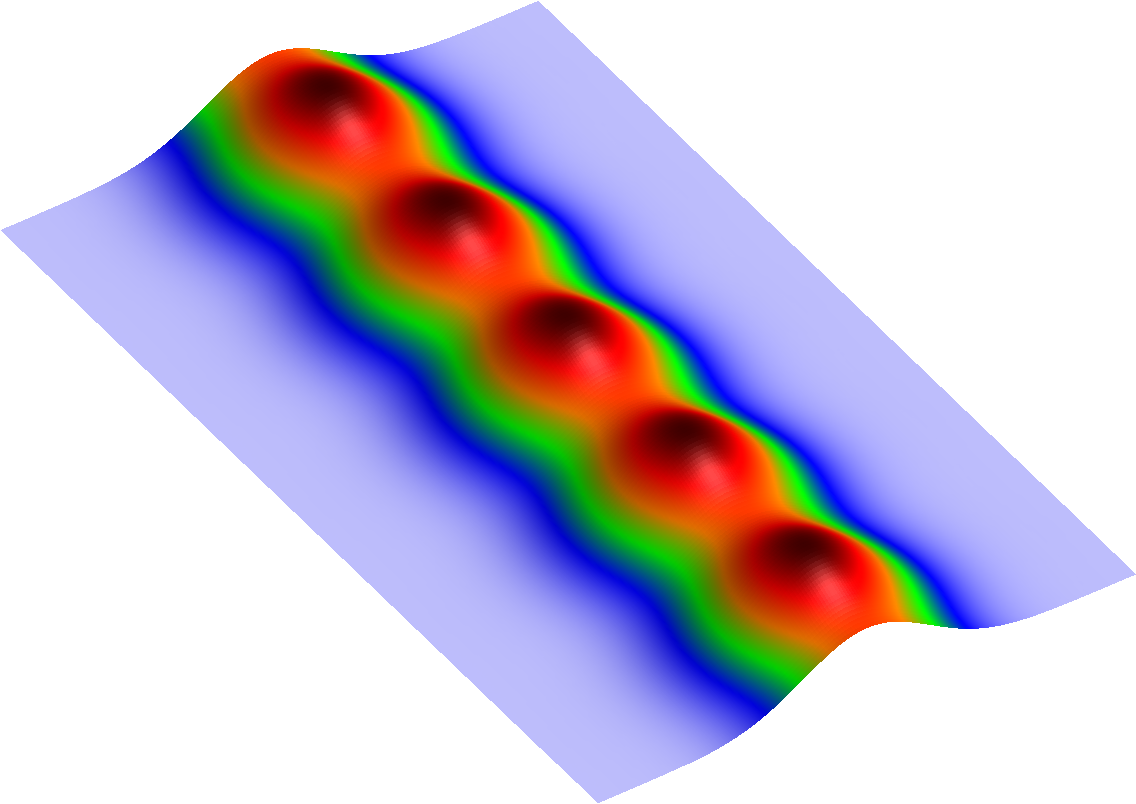}
\includegraphics[width=0.45\textwidth]{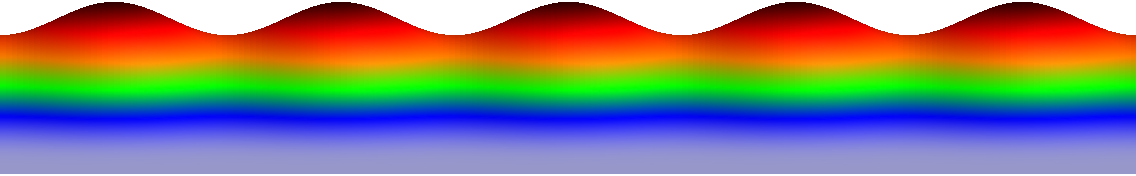}
\caption{}
\label{breather type}
\end{figure}
Thus $||u_R||_{L_4(\Omega_R)} = \lim_{T \to \infty} ||u_R||_{L_4(\Omega_{R, T})} = 1$. Finally, since $u_R$ is the weak limit of $v_{n_k}$, $||u_R||_{W_2^1} \le \liminf ||v_{n_k}||_{W_2^1}$
and $u_R$ is, therefore, a minimizer. By construction, $u_R$ is non-constant in $x$ provided $R$ is large enough.

Now we extend $u_R$ to the whole plane by even reflection and periodic expansion. This gives us a required solution of (\ref{MainEq}) in $\RN^2$ (see Figure~\ref{breather type}).

As a corollary, given $N\in\mathbb{N}$, for $R>R^*(N)$ we obtain at least $N$ different nontrivial positive solutions of
(\ref{MainEq}) in $\mathbb{R}^2$, $2R$-periodic in $x$ and symmetrically decreasing in $y$.

\medskip

\subsection{Sign-changing solutions}
Now we consider problem \eqref{VarProblem} in the rectangle $(0, R) \times (0, aR)$ with the additional condition of $u=0$ on $\{0, R\} \times (0, aR)$ (the vertical sides of the rectangle). Similarly to the previous sections we deduce the solution is concentrated in a half-circle adherent to the horizontal side. We again extend it to the strip $(0, R) \times \RN$ by even reflection and then to the strip $(0, 2R) \times \RN$ by odd reflection. Next we extend the function to $\RN^2$ periodically which gives us a solution with alternating signs (see Figure~\ref{rectangular lattice sign 1}).

We can also consider the boundary condition $u=0$ on all the boundary of the rectangle which will give us a solution concentrated in a circle. We extend it to $(-R, R) \times (-aR, aR)$
oddly and to $\RN^2$ periodically. The resulting signs are in staggered order (see Figure~\ref{rectangular lattice sign 2}). Breather type case is analogous 
(see Figure~\ref{breather type sign 1}).

\begin{figure}[h]
\begin{minipage}[b]{0.49\textwidth}
\centering
\includegraphics[width=6cm]{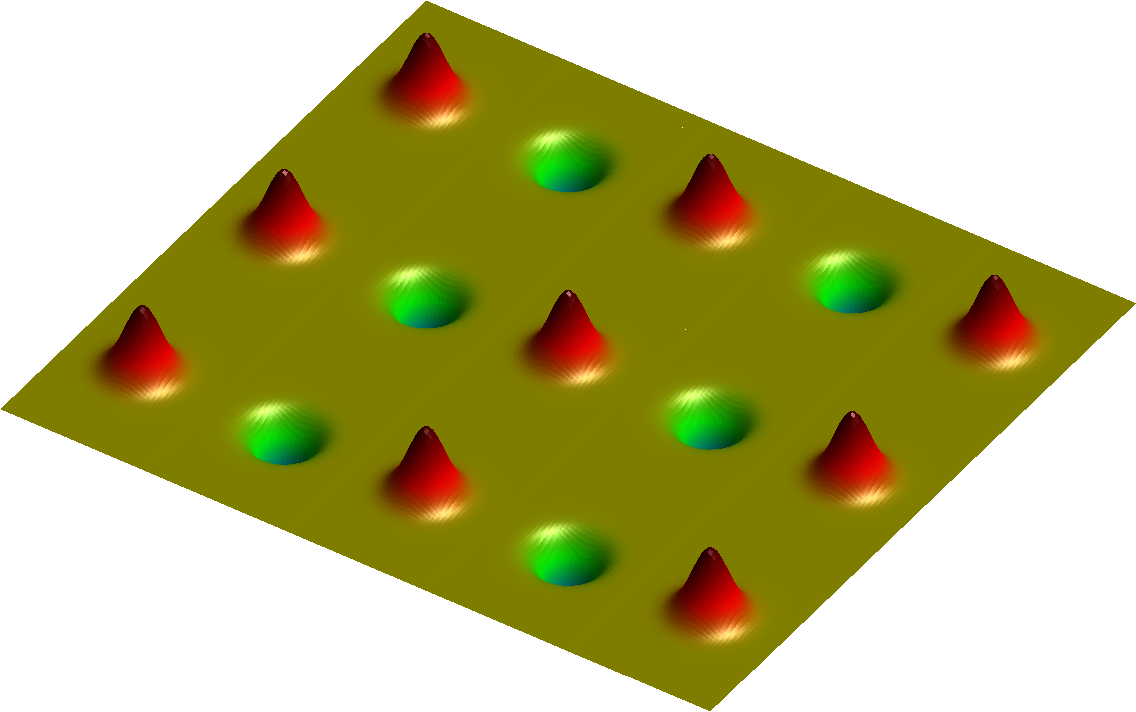}
\end{minipage}
\hfill
\begin{minipage}[b]{0.49\textwidth}
\centering
\includegraphics[width=7cm]{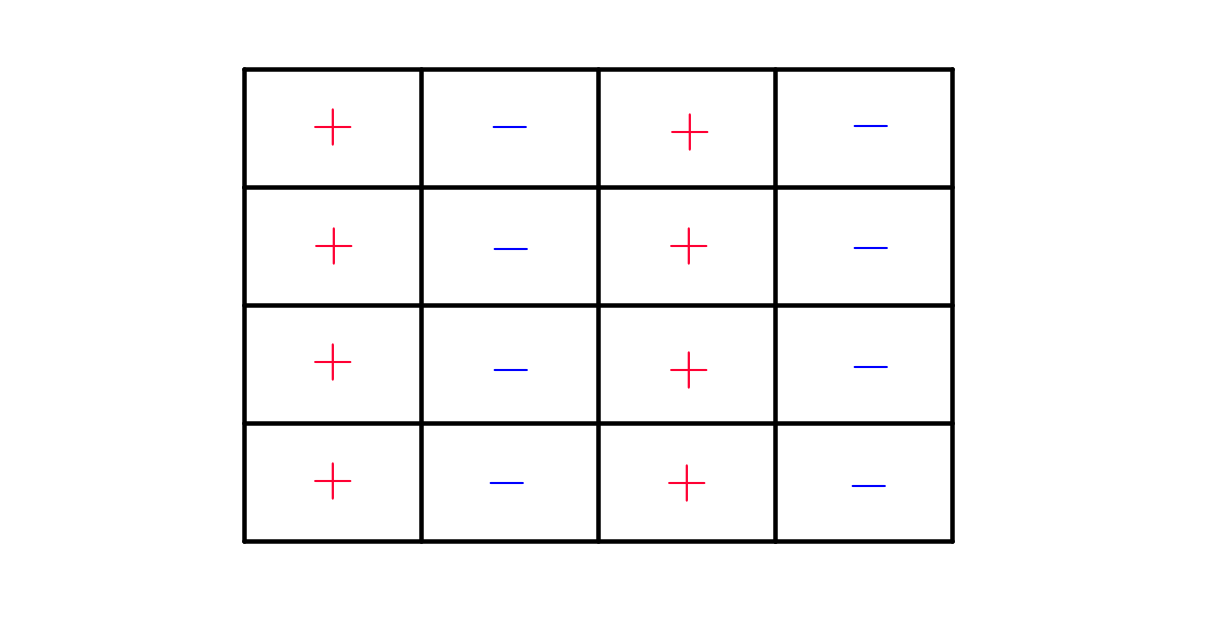}
\end{minipage}
\caption{}
\label{rectangular lattice sign 1}
\end{figure}

\begin{figure}[h]
\begin{minipage}[b]{0.49\textwidth}
\centering
\includegraphics[width=6cm]{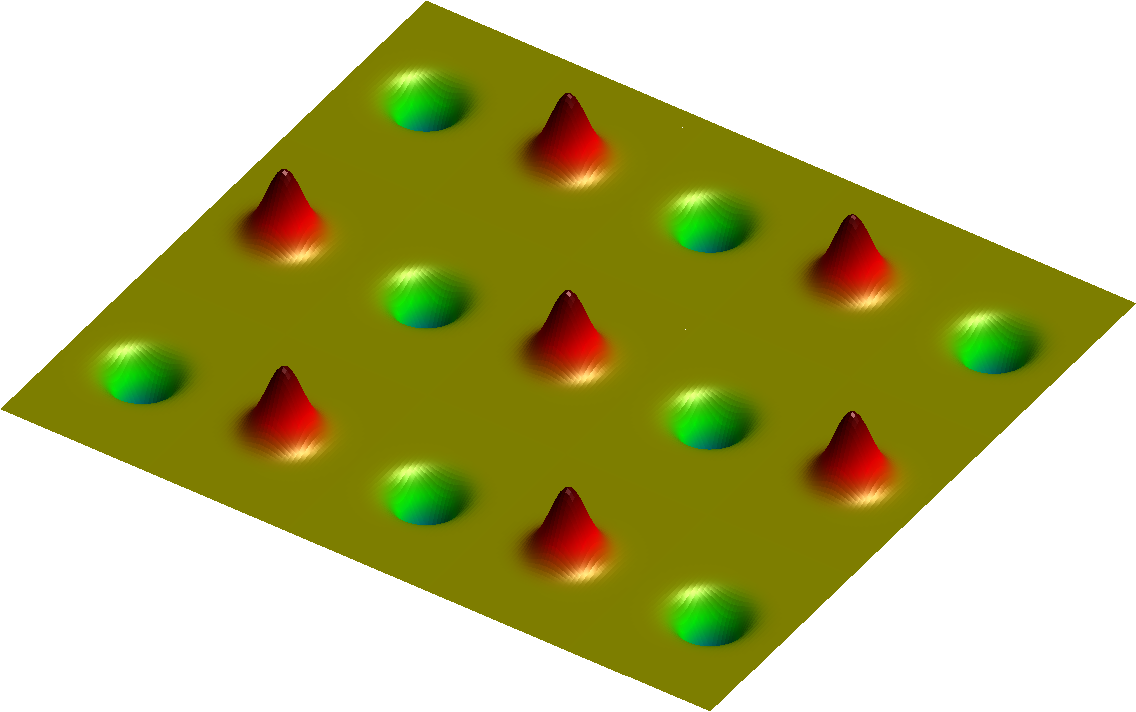}
\end{minipage}
\hfill
\begin{minipage}[b]{0.49\textwidth}
\centering
\includegraphics[width=7cm]{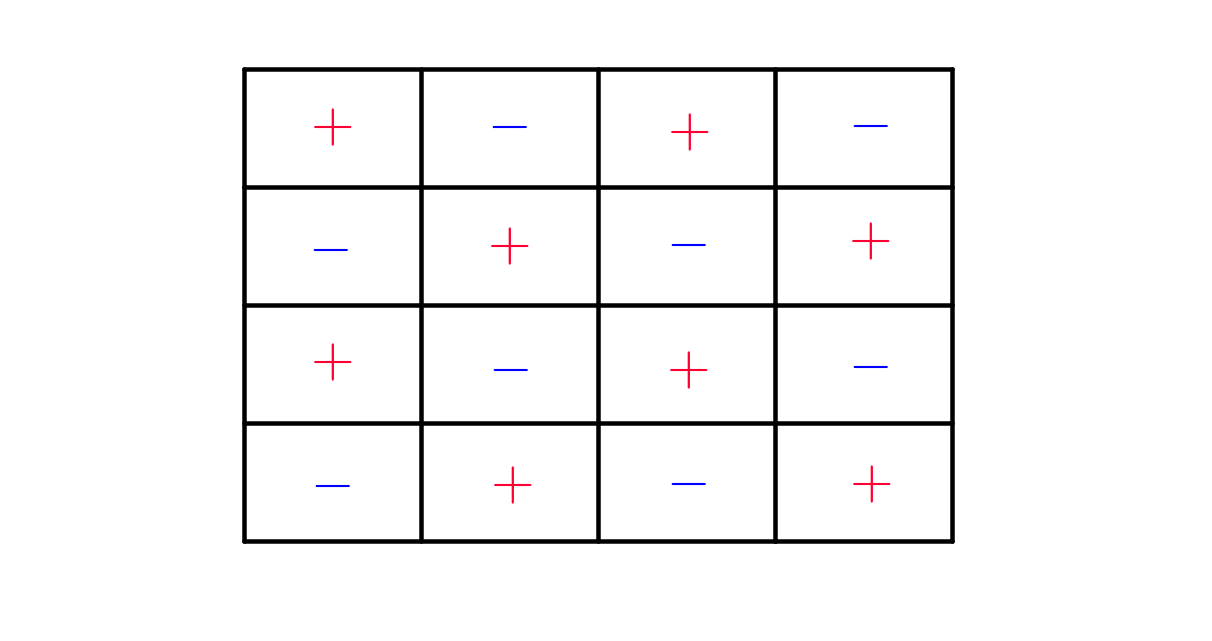}
\end{minipage}
\caption{}
\label{rectangular lattice sign 2}
\end{figure}

\begin{figure}[h]
\begin{minipage}[b]{0.49\textwidth}
\centering
\includegraphics[width=6cm]{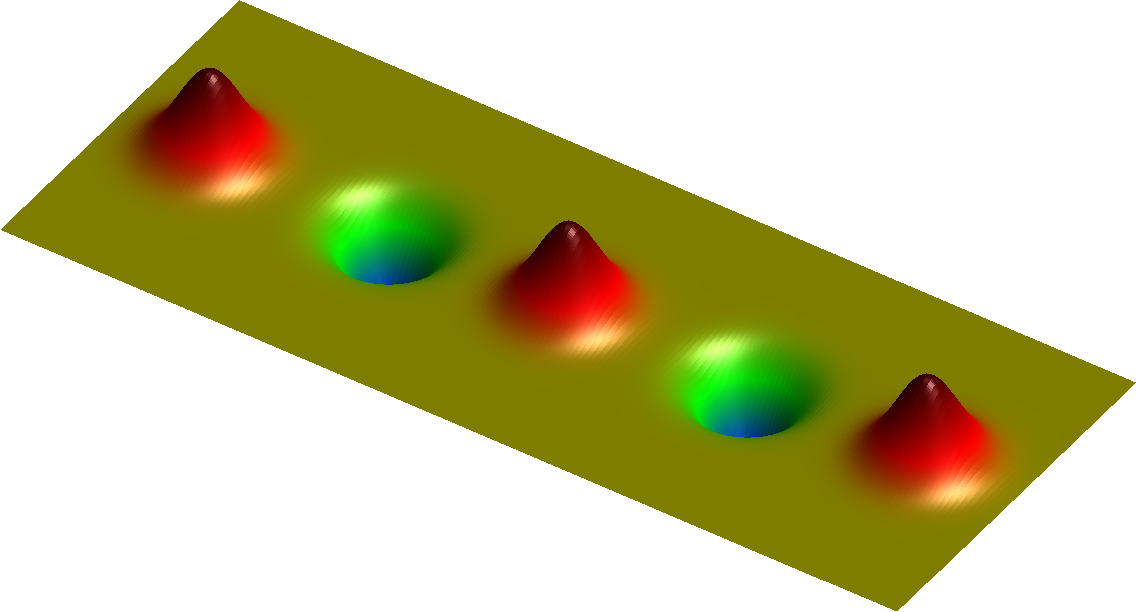}
\end{minipage}
\hfill
\begin{minipage}[b]{0.49\textwidth}
\centering
\includegraphics[width=6cm]{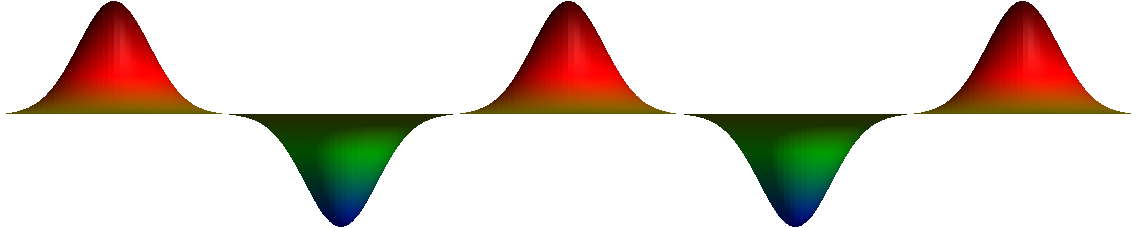}
\end{minipage}
\caption{}
\label{breather type sign 1}
\end{figure}

The equilateral triangular case is a little different. If the condition $u=0$ holds for only one side of the triangle then the minimizer is concentrated in the opposite corner.
We extend it to the hexagon and the function is positive there. We could try to extend it to the whole plane but the partitioning of $\RN^2$ into hexagons have three hexagons
which are pairwise adherent and each pair must have different signs which is impossible.

However if we set $u=0$ on the whole boundary of the triangle we do not have this problem. We use odd reflection and extend the function to $\RN^2$ like on the picture
(see Figure~\ref{hexagonal lattice sign 1}). Doing the same with triangles that have angles $\pi/2, \pi/4, \pi/4$ and $\pi/2, \pi/3, \pi/6$ gives us two more types of
solutions (see Figures~\ref{rectangular lattice sign 3}, \ref{hexagonal lattice sign 2}).

\begin{figure}[h]
\begin{minipage}[b]{0.49\textwidth}
\centering
\includegraphics[width=7cm]{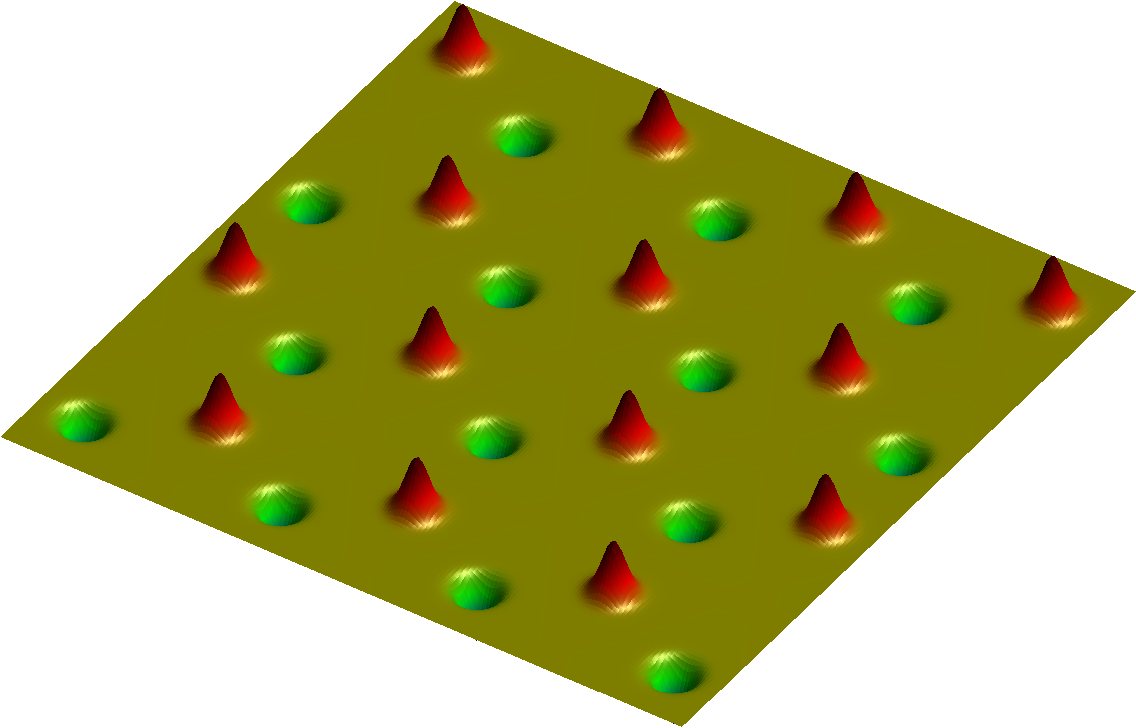}
\end{minipage}
\hfill
\begin{minipage}[b]{0.49\textwidth}
\centering
\includegraphics[width=9cm]{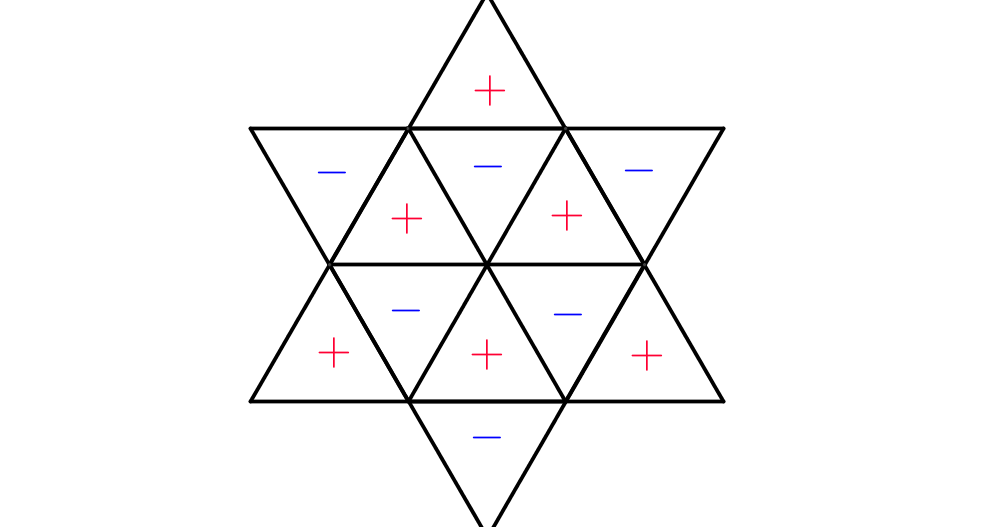}
\end{minipage}
\caption{}
\label{hexagonal lattice sign 1}
\end{figure}

\begin{figure}[h]
\begin{minipage}[b]{0.49\textwidth}
\centering
\includegraphics[width=7cm]{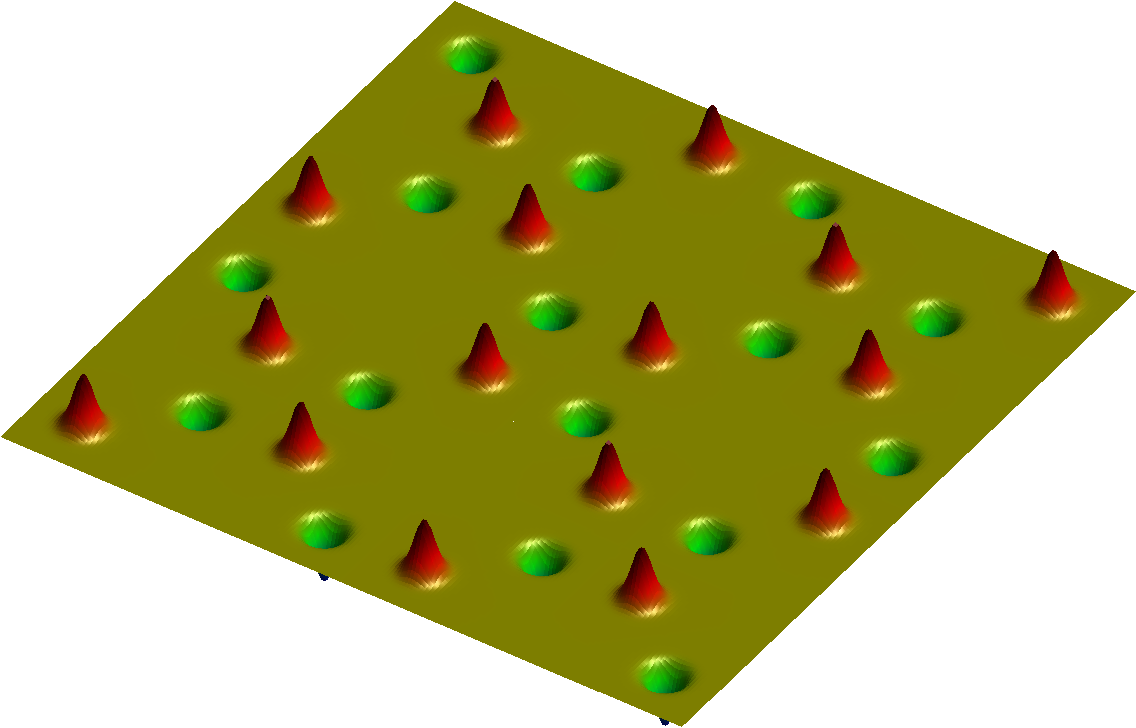}
\end{minipage}
\hfill
\begin{minipage}[b]{0.49\textwidth}
\centering
\includegraphics[width=8cm]{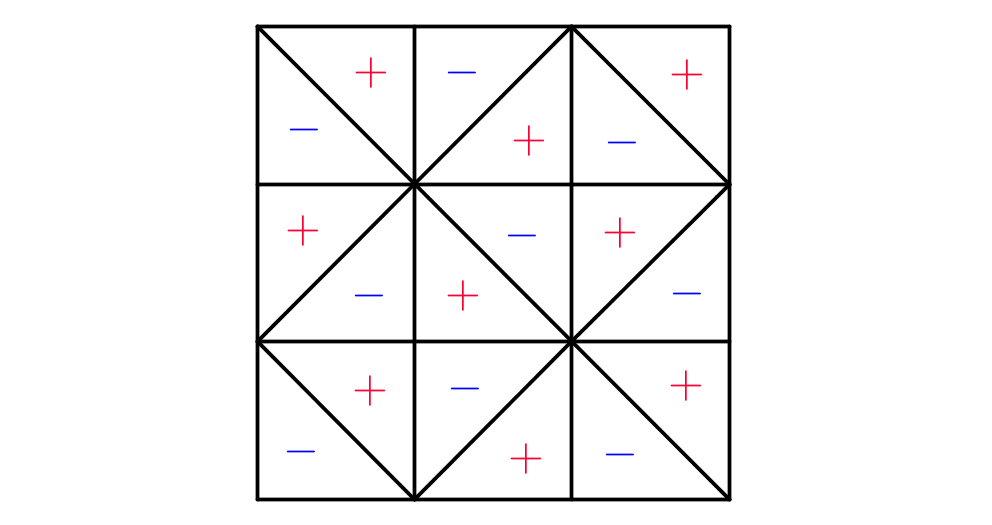}
\end{minipage}
\caption{}
\label{rectangular lattice sign 3}
\end{figure}

\begin{figure}[h]
\begin{minipage}[b]{0.49\textwidth}
\centering
\includegraphics[width=7cm]{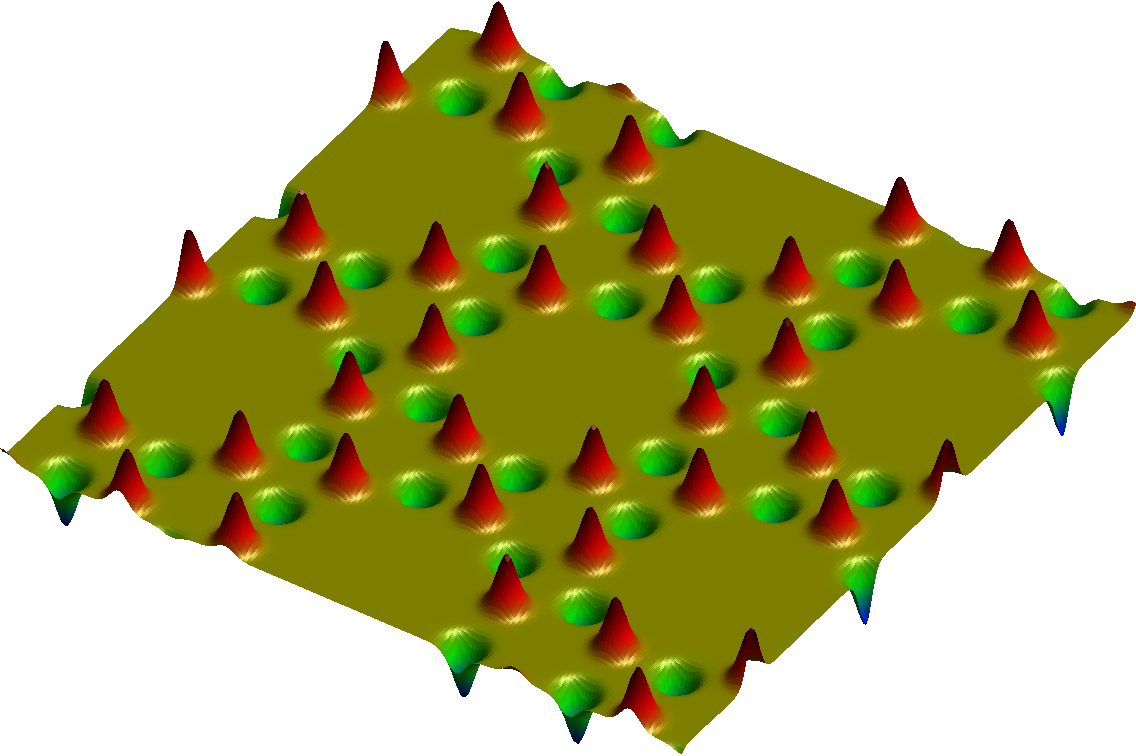}
\end{minipage}
\hfill
\begin{minipage}[b]{0.49\textwidth}
\centering
\includegraphics[width=9cm]{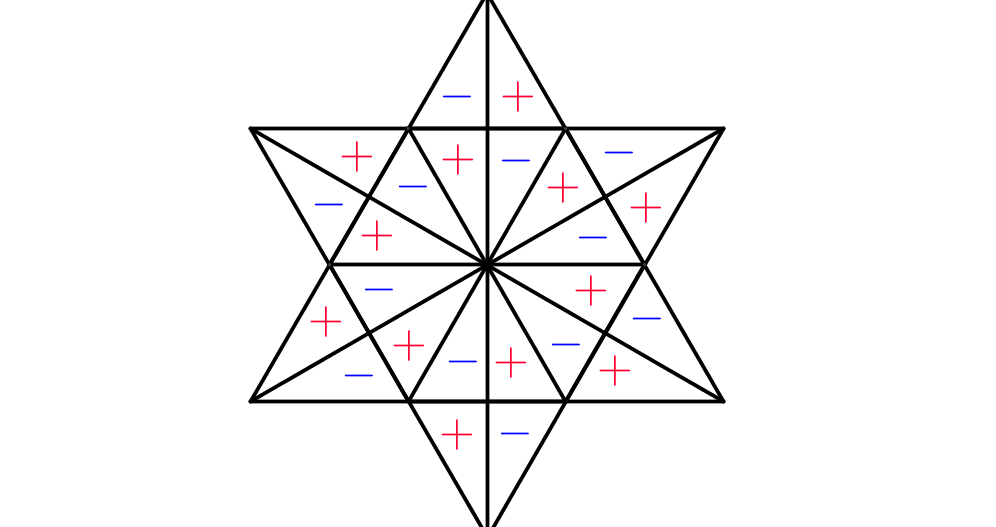}
\end{minipage}
\caption{}
\label{hexagonal lattice sign 2}
\end{figure}

\section{Radially symmetric solutions}
In this section we prove the existence of a positive radially symmetric solution using the standard method and the existence of a countable family of radially symmetric solutions using
Lusternik-Schnirelmann Theorem. Notice that similar results were proved by the methods of the theory of dynamical systems in \cite{Jones}.	

\subsection{Positive solution}

Here we consider the problem \eqref{ConMinProblem} in $\Omega = \Omega_R = \RN^2$. As in the breather type case we can find a minimization sequence $v_n$ which is radially symmetric
since symmetrical rearrangement does not increase the functional \eqref{VarProblem} (see, e.g.,\cite[II.9]{Ka}). We again note that $v_n$ must concentrate around $(0, 0)$. By considering
the problem in balls $B(0, T)$ and taking $T$ to infinity we prove the nonzero limit exists similarly to Subsection \hyperref[ssec:Breather]{2.5}. In the end this gives us a positive,
symmetrically decreasing solution of the problem \eqref{MainEq} in $\RN^2$.

\subsection{A countable family}
To prove that problem \eqref{MainEq} has a countable number of radial solutions we need the
following statement (the Lusternik-Schnirelmann theorem, see, e.g., \cite[Chapter 8]{osm}).

\begin{propos}
\label{Lusternik_Schnirelmann}
Suppose $H$ is a Hilbert space and $I : H \to \RN$ is a (nonlinear) functional such that:
\begin{enumerate}
\item $I$ and is weakly continuous and smooth (namely, $I\in C^{1, 1}_{loc}$),
\item $I$ is even and $I[0]=0$,
\item $I[u]>0$ and $||I^{\prime}[u]||>0$ for $u \ne 0$.
\end{enumerate}
Then $I$ has at least a countable number of critical points on the sphere $S_a =
\{x \in H \mid ||x||=a\}$ for every $a>0$.
\end{propos}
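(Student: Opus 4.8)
The plan is to construct the critical points by an equivariant minimax scheme based on the Krasnoselskii genus, i.e.\ the classical Lusternik--Schnirelmann machinery for even functionals (see \cite{osm}). Write $T_uS_a = \{v : \langle v, u\rangle = 0\}$ for the tangent space and let $\nabla_T I[u] = I'[u] - a^{-2}\langle I'[u], u\rangle\,u$ be the tangential gradient, so that a critical point of $I$ on $S_a$ is a zero of $\nabla_T I$, i.e.\ a point where $I'[u]=\mu u$ for a Lagrange multiplier $\mu$. Let $\gamma$ denote the genus and set $\Gamma_k = \{A\subset S_a : A \text{ compact}, A=-A, \gamma(A)\ge k\}$, and define the minimax values $c_k = \sup_{A\in\Gamma_k}\min_{u\in A}I[u]$. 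First I would record the elementary facts: each $\Gamma_k$ is nonempty because the sphere of radius $a$ in any $k$-dimensional subspace lies in $S_a$ and has genus $k$; each $c_k>0$ because $I$ is continuous and strictly positive on the compact set $A\subset S_a$ with $0\notin S_a$ (here the hypothesis $I[u]>0$ for $u\ne0$ enters); the sequence is non-increasing since $\Gamma_{k+1}\subset\Gamma_k$; and it is bounded above because $I$, being weakly continuous, is bounded on the weakly compact closed ball $\overline{B}_a\supset S_a$.

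The two analytic engines are a Palais--Smale compactness and an odd deformation lemma, and the heart of the matter is the first. \textbf{Compactness.} I would show that any sequence $u_n\in S_a$ with $I[u_n]\to c>0$ and $\nabla_T I[u_n]\to0$ has a strongly convergent subsequence. Passing to a weak limit $u_n\rightharpoonup u$, weak continuity gives $I[u]=c>0$, hence $u\ne0$ by $I[0]=0$. The crucial step is the standard fact that a weakly continuous $C^1$ functional has a \emph{compact} derivative, so $I'[u_n]\to I'[u]$ strongly; combined with $I'[u_n]=\mu_n u_n+o(1)$ and $\mu_n=a^{-2}\langle I'[u_n],u_n\rangle\to\mu=a^{-2}\langle I'[u],u\rangle$, this yields $\mu_n u_n\to I'[u]$ and, in the weak limit, $I'[u]=\mu u$. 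At this point the hypothesis $\|I'[u]\|>0$ for $u\ne0$ forces $\mu\ne0$, so $u_n=\mu_n^{-1}(\mu_n u_n)\to\mu^{-1}I'[u]=u$ strongly and $u\in S_a$. \textbf{Deformation.} Since $I\in C^{1,1}_{loc}$, the field $\nabla_T I$ is locally Lipschitz and tangent to $S_a$, so the suitably renormalized ascending gradient flow is a well-defined isotopy of $S_a$; evenness of $I$ makes $\nabla_T I$ odd, which keeps the flow equivariant. The compactness above then gives the usual conclusion: if $c$ carries no critical point, there are $\eps>0$ and an odd continuous $\eta:S_a\to S_a$ with $\eta(\{I>c-\eps\})\subset\{I\ge c+\eps\}$.

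With these in hand the proof closes by the genus argument. Each $c_k$ is a critical value: choosing $A\in\Gamma_k$ with $\min_A I>c_k-\eps$ and applying $\eta$ produces $\eta(A)\in\Gamma_k$ (the genus does not drop under odd continuous maps, and $\eta(A)$ stays compact and symmetric) with $\min_{\eta(A)}I\ge c_k+\eps$, contradicting the definition of $c_k$. Moreover, if $c_k=c_{k+1}=\dots=c_{k+m-1}=c$, a refined deformation excising a symmetric neighborhood of the critical set $K_c$, together with the subadditivity of $\gamma$, forces $\gamma(K_c)\ge m$; since $K_c$ is compact by Palais--Smale, it is then infinite whenever this genus exceeds one. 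Finally I would invoke the dichotomy for the positive non-increasing sequence $\{c_k\}$: either it takes infinitely many distinct values, giving infinitely many critical levels, or it is eventually constant, giving a single level whose critical set has infinite genus and is therefore infinite. In either case $I$ has countably many critical points on $S_a$. The main obstacle is the compactness step --- upgrading weak continuity of $I$ to strong convergence of $I'$ and hence of $u_n$ --- because everything downstream (that the minimax values are attained as critical values and the multiplicity count via the genus) rests on a genuine Palais--Smale condition holding on the constraint manifold.
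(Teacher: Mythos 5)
The paper offers no proof of this proposition to compare against: it is invoked as a known result with the citation \cite[Chapter 8]{osm}, and is then simply applied with $H$ the radial subspace of $W_2^1(\RN^2)$ and $I[u]=\int_{\RN^2}u^4$. Your sketch is the standard Lusternik--Schnirelmann/Krasnoselskii genus minimax argument, which is exactly the machinery such references supply, and its architecture is sound: the classes $\Gamma_k$ are nonempty (spheres of $k$-dimensional subspaces), $0<c_{k+1}\le c_k<\infty$, the ascending tangential gradient flow is well defined and equivariant because $\nabla_T I$ is locally Lipschitz and odd, and the final dichotomy (infinitely many distinct levels, or some level with $\gamma(K_c)\ge 2$ and hence $K_c$ infinite) yields countably many critical points. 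Your handling of the Lagrange multiplier is also correct: if $\mu=0$ then $\mu_n u_n\to 0$ would force $I'[u]=0$ with $u\ne 0$, contradicting hypothesis 3, so $\mu\ne 0$ and $u_n\to u$ strongly.

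The one place where you lean on an assertion that is imprecise as literally stated is the ``standard fact that a weakly continuous $C^1$ functional has a compact derivative.'' For merely $C^1$ functionals this argument cannot be run (the modulus of differentiability is not uniform along the sequence); what is true --- and this is precisely what the hypothesis $I\in C^{1,1}_{loc}$ is there for --- is: if $I$ is weakly sequentially continuous and $I'$ is Lipschitz on bounded sets, then $I'$ is weak-to-norm sequentially continuous. Indeed, suppose $u_n\rightharpoonup u$ but $\|I'[u_n]-I'[u]\|\ge\delta>0$; choose unit vectors $h_n$ with $\langle I'[u_n]-I'[u],h_n\rangle\ge\delta/2$ and pass to a subsequence $h_n\rightharpoonup h$. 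Since all points involved lie in a fixed ball, the Taylor remainder is uniform in $n$, giving
\begin{equation*}
\frac{\delta t}{2}\;\le\;\langle I'[u_n]-I'[u],\,th_n\rangle
\;\le\;\bigl(I[u_n+th_n]-I[u_n]\bigr)-\bigl(I[u+th_n]-I[u]\bigr)+Lt^2 .
\end{equation*}
As $n\to\infty$ each term converges by weak continuity ($u_n+th_n\rightharpoonup u+th$ and $u+th_n\rightharpoonup u+th$), so the bracketed difference tends to $0$ and we obtain $\delta/2\le Lt$, absurd for small $t$. With this lemma spelled out, the Palais--Smale verification --- which you rightly identify as the heart of the matter --- is complete, and the remaining deformation and genus bookkeeping is standard.
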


We take the subspace of radial functions in $W_2^1(\RN^2)$ as $H$ and put $I[u] =
\int_{\RN^2}u^4$. We now show that $I$ satisfies the conditions of Proposition
\ref{Lusternik_Schnirelmann}. The conditions 2 and 3 are evident. The map
$u \mapsto I^{\prime}[u]$ is Lipschitz on any bounded set in $H$ so the functional is
$C^{1, 1}_{loc}$.
It remains to prove that $I$ is weakly continuous. We pass to the polar coordinates and
write
$$||u||^2_H = 2\pi \int_0^\infty r(|u|^2 + |u^\prime|^2)\, dr.$$
$$||u||^4_{L_4} = 2\pi \int_0^\infty r |u|^4 \, dr.$$
Using an obvious inequality
$$||u||_{L_4(R, R+1)} \le C ||u||_{W_2^1(R, R+1)}$$
we obtain for $R \ge 1$
\begin{multline*}
\Big(\int_R^{R+1} r |u|^4 \, dr\Big)^{1/2} \le (R+1)^{1/2}CR^{-1} \int_R^{R+1} r(|u|^2 +
|u^\prime|^2) \, dr \\
= C\Big(\frac{R+1}{R^2}\Big)^{1/2}\int_R^{R+1} r(|u|^2 + |u^\prime|^2) \, dr.
\end{multline*}
This implies, similar to Lemma \ref{no_vanishing_lemma} (Appendix \hyperref[ssec:App A]{A}),
\begin{multline}
\label{NoGoodName}
\int_R^{\infty}r |u|^4 \, dr = \sum_{k=0}^{\infty} \int_{R+k}^{R+k+1}r |u|^4 \, dr \\
\le \Big(\sup_k \int_{R+k}^{R+k+1}r |u|^4 \, dr\Big)^{1/2}\sum_{k=0}^{\infty}
\Big(\int_{R+k}^{R+k+1}r |u|^4 \, dr\Big)^{1/2} \\
\le C\Big(\sup_k \int_{R+k}^{R+k+1}r |u|^4 \, dr\Big)^{1/2}\sum_{k=0}^{\infty}
\bigg(\frac{R+k+1}{(R+k)^2}\bigg)^{1/2}\int_{R+k}^{R+k+1} r(|u|^2 + |u^\prime|^2) \, dr \\
\le C \frac{R+1}{R^2} \Big(\int_R^\infty r(|u|^2 + |u^\prime|^2) \, dr\Big)^2.
\end{multline}
It is well known that the embeddings $W_2^1(B(0, R)) \emb L_4(B(0, R))$ are compact. It follows
that the mappings $u \mapsto \left. u \right|_{B_R(0)}$ from $H$ to $L_4(\RN^2)$ are compact.
By \eqref{NoGoodName} the embedding $H\emb L_4(\RN^2)$ is the norm limit of compact operators
hence compact which proves the weak continuity of $I$.

The critical points of $I$ on the sphere $S_a$ are those where
\begin{equation}
\label{radial}
\int\limits_{\RN^2} u^3h = \lambda\int\limits_{\RN^2}(\nabla u\nabla h + uh)
\end{equation}
for some $\lambda \in \RN$ and every $h \in H$.

By the principle of symmetric criticality, see \cite{Pal}, the relation (\ref{radial}) holds
for any $h\in W^1_2(\RN^2)$.
Taking $h=u$ shows that $\lambda > 0$. Therefore we can multiply $u$ by $\sqrt{\lambda}$ and get
a solution of \eqref{MainEq} in $\RN^2$.
Thus, Lusternik-Schnirelmann theorem implies that there is a countable number of radial solutions.

\section{Some generalizations}
Our arguments are valid if we consider the equation \eqref{MainEq} in $\RN^3$. By choosing an
appropriate domain $\Omega$ we can get the following:
\begin{enumerate}
\item solutions periodic in $x, y, z$;

\item solutions triangular-periodic in $x, y$ and periodic in $z$;

\item solutions periodic in $x, y$ and symmetrically decreasing in $z$;

\item solutions triangular-periodic in $x, y$ and symmetrically decreasing in $z$;

\item solutions periodic in $x$ and symmetrically decreasing in $y, z$;

\item radial solutions etc.
\end{enumerate}

More generally, consider the equation
\begin{equation}
\Delta_pu - |u|^{p-2}u + |u|^{q-2}u = 0 \quad \mbox{in} \quad \RN^n
\label{GenerEq}
\end{equation}
Here $1 < p < \infty$, $\Delta_pu \equiv \mathrm{div}(|\nabla u|^{p-2}u)$ is a $p$-Laplacian
while $q \in (p, p^*)$.
The corresponding variational problem is the minimization of the functional $J[u] =
||u||_{W_p^1}/||u||_{L_q}$.
Since the Concentration Theorem (Theorem \ref{Concentration_thr} in Appendix
\hyperref[ssec:App A]{A}) holds true, the argument similar to the one in Section 2 can be applied
again.
In that way we obtain positive solutions of \eqref{GenerEq} which have various periodic lattices
in some variables and are symmetrically decaying in other variables.
The sign-changing solutions with various periodic structures could be obtained as well.

Using the classical Nehari method, it is possible to apply our machinery for a more general
equation
\begin{equation}
\Delta_pu - |u|^{p-2}u + f(u) = 0 \quad \mbox{in} \quad \RN^n
\label{NehariMethodEquation}
\end{equation}
with an odd function $f$ satisfying some natural assumptions. Roughly speaking, $f(s)$ is assumed
to be ``more convex'' than $s^p$ for $s > 0$ and to have subcritical growth at infinity.

For instance, the requirements for $f$ can be given as follows:
\begin{gather*}
sf^{\prime}(s) > (p-1)f(s) \ \mbox{for almost any} \ s \ge 0; \\\
\liminf\limits_{s \to \infty}\frac{sf(s)}{\int_0^s f(t) \, dt} > p;\\
\lim\limits_{s \to 0} \frac{f(s)}{s^{p-1}} = 0; \\
\lim\limits_{s \to \infty}\frac{sf(s)}{\Phi(s)} = 0,
\end{gather*}
where
\begin{equation*}
\begin{aligned}
\Phi(s) &= s^{p^*},  && p < n, \\
\Phi(s) &= s^q \ \mbox{for any} \ q \in (p, \infty), && p\ge n.
\end{aligned}
\end{equation*}

The method used to prove the existence of solutions is well known (see e.~g. \cite{YYLi}, \cite{N04}).
After the solution is found the concentration theory can be applied (with appropriate modifications) and the results follow.

\section{Appendix A}
\label{ssec:App A}

\subsection{Concentration}
\begin{propos}[a variant of Lemma 1.1 from \cite{Lio}]
 \label{point_concentration_lemma}
Suppose that $G(s)$, $s\in\RN$ is a positive function. Consider a sequence of functions $u_j(x)$, $x\in\mathbb{R}^n$ and suppose $\int_{{\RN^n}} G(u_j) dx$ is finite for every $j$.
Then (up to a subsequence) one of the two conditions is satisfied:

\begin{enumerate}
\item ({\it concentration}) There exist $\lambda\in (0,1]$ and a sequence of points $x_j \in \RN^n$
such that for every $\varepsilon > 0$ there exist $\rho>0$, a sequence $\rho'(j) \to \infty$ and a number $j_0$ such that for every $j \geqslant j_0$
\begin{multline}
 \label{concentration}
 \bigg | \int\limits_{B(x_j,\rho)} G(u_j)\, dx - \lambda \int\limits_{{\mathbb{R}^n}} G(u_j)\, dx \bigg | \\
 +\ \bigg | \int\limits_{\; \mathbb{R}^n \setminus B(x_j,\rho'(j))} \! G(u_j)\, dx - (1 - \lambda) \int\limits_{{\mathbb{R}^n}} G(u_j)\, dx \bigg |
 < \varepsilon \int\limits_{{\mathbb{R}^n}} G(u_j)\, dx.
\end{multline}
In that case $x_j$ is called a concentration sequence of $G(u_j)$ and $\lambda$ is called a weight of the sequence.

\item ({\it vanishing}) For every $\rho>0$ the following equality holds:
\begin{equation}
 \label{vanishing}
 \lim_{j\to\infty} \sup\limits_{x\in \RN^n}\int\limits_{B(x,\rho)} G(u_j)\, dx =0.
\end{equation}
\end{enumerate}
\end{propos}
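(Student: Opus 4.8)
The plan is to run Lions' classical argument built around the Lévy concentration function, after first normalizing the total mass. Since $G>0$, the mass $m_j := \int_{\RN^n} G(u_j)\,dx$ is strictly positive and finite, so I may pass to a subsequence along which $m_j$ converges to some $m\in[0,\infty]$; if $m=0$ then $\int_{B(x,\rho)}G(u_j)\,dx\le m_j\to 0$ and the vanishing alternative \eqref{vanishing} holds trivially, so I assume $m>0$ (in all our applications $m_j\equiv 1$, as there $\|u_j\|_{L^4}=1$). Normalizing, set $\mu_j := G(u_j)\,dx / m_j$, a sequence of probability measures on $\RN^n$, and observe that every term in both alternatives is homogeneous of degree one in $G(u_j)\,dx$; hence it suffices to establish the dichotomy for the $\mu_j$ and then multiply back by $m_j$.

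The central object is the concentration function
\[
Q_j(\rho) = \sup_{x\in\RN^n}\mu_j\big(B(x,\rho)\big), \qquad \rho\ge 0,
\]
which is nondecreasing in $\rho$, takes values in $[0,1]$, and satisfies $Q_j(\rho)\to 1$ as $\rho\to\infty$ because $\mu_j$ is a probability measure. Since the $Q_j$ are uniformly bounded nondecreasing functions, Helly's selection theorem provides a subsequence (not relabeled) converging, at every continuity point of a nondecreasing limit $Q$ (the discontinuities of $Q$ form a countable set), to $Q$ with $0\le Q\le 1$. I then set $\lambda := \lim_{\rho\to\infty}Q(\rho)\in[0,1]$, and the two alternatives correspond precisely to $\lambda=0$ and $\lambda\in(0,1]$. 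If $\lambda=0$ then $Q\equiv 0$, so $\sup_x\mu_j(B(x,\rho))\to 0$ for each fixed $\rho$, which (after multiplying by the bounded $m_j$) is exactly condition \eqref{vanishing}.

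Suppose now $\lambda\in(0,1]$. Fix $\eps>0$. Using $Q(\rho)\uparrow\lambda$ and $Q\le\lambda$, I pick a single continuity radius $\rho$ with $Q(\rho)>\lambda-\eps/4$; then $Q_j(\rho)\to Q(\rho)$ forces $Q_j(\rho)\in(\lambda-\eps/2,\lambda+\eps/4)$ for large $j$, and choosing $x_j$ to almost realize the supremum gives $|\mu_j(B(x_j,\rho))-\lambda|<\eps/2$, which controls the first term. The delicate point is producing divergent outer radii $\rho'(j)$ controlling the second term: along continuity points $r_m\uparrow\infty$ of $Q$, the relation $Q_j(r_m)\to Q(r_m)\le\lambda$ yields strictly increasing indices $K_m$ beyond which $Q_j(r_m)<\lambda+\eps/4$, and I define the staircase $\rho'(j):=r_m$ for $K_m\le j<K_{m+1}$, so that $\rho'(j)\to\infty$ while $\mu_j(B(x_j,\rho'(j)))\le Q_j(\rho'(j))<\lambda+\eps/4$. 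Since $B(x_j,\rho)\subset B(x_j,\rho'(j))$, the larger ball keeps mass within $\eps$ of $\lambda$, so its complement carries mass within $\eps$ of $1-\lambda$. Collecting both estimates and multiplying through by $m_j$ recovers inequality \eqref{concentration}, with $x_j$ the concentration sequence of weight $\lambda$.

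I expect the genuine obstacle to be exactly this coupled construction of the fixed inner radius $\rho$ and the divergent outer radii $\rho'(j)$: one must arrange, by a diagonal/staircase argument against the Helly limit $Q$, that the annulus $B(x_j,\rho'(j))\setminus B(x_j,\rho)$ carries asymptotically negligible mass, and this is precisely where the monotonicity of $Q$ together with the ceiling $Q\le\lambda$ enters. The rest — tightening the $\eps$-budgets so the two terms sum below $\eps$, and verifying the normalization reduction — is routine bookkeeping.
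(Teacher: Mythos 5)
You should first note that the paper does not actually prove this proposition --- it is stated as ``a variant of Lemma 1.1 from \cite{Lio}'' with no proof given --- so the only question is whether your argument is correct on its own. Your framework (normalize to probability measures $\mu_j$, Lévy concentration function $Q_j$, Helly selection, $\lambda:=\lim_{\rho\to\infty}Q(\rho)$, vanishing iff $\lambda=0$, and the staircase construction of $\rho'(j)$ against the ceiling $Q\le\lambda$) is exactly the standard Lions machinery, and the staircase step you flag as the ``genuine obstacle'' is in fact carried out correctly.

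The genuine gap is a quantifier inversion in the choice of centers. The statement asserts: there exist $\lambda$ \emph{and a single sequence} $x_j$ such that \emph{for every} $\eps>0$ suitable $\rho,\rho'(j),j_0$ exist. In your proof you first fix $\eps$, then pick the continuity radius $\rho=\rho(\eps)$, and only then choose $x_j$ ``to almost realize the supremum'' $Q_j(\rho)$ --- so your centers depend on $\eps$. What you prove is the weaker form $\forall\eps\,\exists (x_j^\eps),\rho,\rho'(j)$, which is essentially Lions' dichotomy alternative; it is strictly weaker. Concretely: let $\mu_j$ consist of a bump of mass $0.3$ spread over a ball of radius $10$, a bump of mass $0.29$ of radius $1$ at distance $j$ from the first, and mass $0.41$ spread thinly over a huge region. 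Then $\lambda=0.3$, and for $\eps=0.1$ your recipe (with $\rho=1$) may select the $0.29$-bump as the center sequence --- which satisfies the $\eps=0.1$ inequality but fails for every $\eps<0.01$ with any fixed $\rho$, since fixed balls around it never capture mass close to $0.3$. Only the $0.3$-bump works for all $\eps$, and nothing in your construction forces the choices made at different $\eps$ to agree. This matters for the paper: in Subsections 2.2--2.4 the concentration sequence $x_R$ is fixed first (classified by its distance to vertices, moved by Remark \ref{BoundShiftRem}), and $\eps$ is chosen afterwards. Repairing the proof needs a real extra idea, e.g.: take $\eps_m\downarrow 0$, get near-maximizers $y_j^m$ at scales $r_m$, pass to a diagonal subsequence along which all mutual distances $|y_j^m-y_j^{m'}|$ converge in $[0,\infty]$; since disjoint balls each carrying mass $\ge\lambda-\eps_1$ number at most $\lfloor 1/(\lambda-\eps_1)\rfloor$, there are finitely many equivalence classes (bounded mutual distance), so one class contains arbitrarily large $m$, and its representative sequence serves as an $\eps$-independent center. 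A secondary loose end: you allow $m=\lim_j m_j=\infty$, but then your claim that vanishing is degree-one homogeneous fails (vanishing is an absolute, not relative, condition); indeed for $m_j\to\infty$ the statement as written can fail outright (e.g.\ $G(u_j)\,dx$ uniform on $B(0,j)$), so you should either assume $\sup_j m_j<\infty$ or note that the proposition is only invoked for normalized sequences.
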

\begin{rem}
\label{RhoDecreaseRem}
The condition (\ref{concentration}) remains true if we substitute the sequence of radii $\rho'(j)$ with a smaller sequence which tends to infinity and $\rho$ with a larger constant.
\end{rem}

\begin{rem}
\label{BoundShiftRem}
If $x_j$ is a concentration sequence and $y_j$ is a sequence of points such that $|x_j-y_j| \le d$
for some positive $d \in \RN$, then $y_j$ is a concentration sequence as well.
Indeed, it is easy to see that $\rho_y = \rho_x + d$ and $\rho_y\sht(j) = \rho_x\sht(j)-d$ makes
it satisfy \eqref{concentration}. In this case sequence $y_j$ is called
equivalent to $x_j$.
\end{rem}

\subsection{Some lemmata}
\label{ssec:general_theory}
\begin{propos}[A variant of lemma 1.6 from \cite{kolprepr}]
\label{hump_destruction_lemma}
Assume that $1<p<q<\infty$ and functions $a, b, c \in W^1_p(\Omega) \cap L_q(\Omega)$ have separated supports. Suppose also that $b \not\equiv 0$, $c \not\equiv 0$ and
\begin{equation}
\label{hump_destruction_inequality}
\frac{||b||_{W_p^1}^p}{||b||_{L_q}^q} \ge \frac{||c||_{W_p^1}^p}{||c||_{L_q}^q}.
\end{equation}
Let
$$
u = a + b + c
$$
and
$$
U = a + \frac{(||b||_{L_q}^q + ||c||_{L_q}^q)^{1/q}}{||c||_{L_q}}\cdot c.
$$
Then one has
\begin{gather*}
 U=a \mbox{ for } x \in \Omega\setminus({\rm supp \ } c);\\
 \int \limits_{\Omega} U^q\, dx = \int \limits_{\Omega} u^q\, dx;\\
 ||U||_{W_p^1}^p < ||u||_{W_p^1}^p - C(b,c). \\
\end{gather*}
Furthermore, the constant $C(b,c)$ depends only on $\|b\|_{L_q(\Omega)}, \|c\|_{L_q(\Omega)},
\|b\|_{W^1_p(\Omega)}$ and $\|c\|_{W^1_p(\Omega)}$.
\end{propos}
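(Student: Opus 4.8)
The plan is to exploit the separation of the supports so that every norm splits additively into contributions from $a$, $b$ and $c$, and then to observe that the dilation factor in front of $c$ has been chosen precisely so as to keep the $L_q$-mass fixed while trading the comparatively expensive $b$-hump for a cheap rescaling of $c$. Write $\beta=\|b\|_{L_q}^q$ and $\gamma=\|c\|_{L_q}^q$; both are positive since $b,c\not\equiv 0$. Set $t=\big(\tfrac{\beta+\gamma}{\gamma}\big)^{1/q}$, so that $U=a+tc$. The first assertion is immediate: off $\supp c$ the function $c$ vanishes, hence $U=a$ there. For the second assertion, the disjointness of the supports makes the $L_q$-integral additive, so $\|U\|_{L_q}^q=\|a\|_{L_q}^q+t^q\gamma=\|a\|_{L_q}^q+\beta+\gamma=\|u\|_{L_q}^q$, the middle equality being exactly the defining property of $t$ (here $\int(\cdot)^q$ is read as $\|\cdot\|_{L_q}^q$, the functions being nonnegative in the intended application).

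For the main inequality I would again use additivity over the (pairwise disjoint) supports, now for the full $W_p^1$-norm: since $\nabla a=0$ a.e.\ on $\{a=0\}$ and likewise for $b,c$, at a.e.\ point at most one of $a,b,c$ and of their gradients is nonzero, so
\[
\|u\|_{W_p^1}^p=\|a\|_{W_p^1}^p+\|b\|_{W_p^1}^p+\|c\|_{W_p^1}^p,\qquad \|U\|_{W_p^1}^p=\|a\|_{W_p^1}^p+t^p\,\|c\|_{W_p^1}^p.
\]
Subtracting, the $a$-terms cancel and the problem reduces to bounding
$\|u\|_{W_p^1}^p-\|U\|_{W_p^1}^p=\|b\|_{W_p^1}^p-(t^p-1)\,\|c\|_{W_p^1}^p$
below by a positive constant depending only on the listed quantities.

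The key estimate is a convexity inequality. Writing $t^p-1=(1+\beta/\gamma)^{p/q}-1$ and using that $0<p/q<1$, Bernoulli's inequality gives $(1+s)^{p/q}\le 1+\tfrac{p}{q}s$, with strict inequality at the positive value $s=\beta/\gamma$; multiplying by $\|c\|_{W_p^1}^p>0$ yields $(t^p-1)\|c\|_{W_p^1}^p<\tfrac{p}{q}\cdot\tfrac{\beta}{\gamma}\|c\|_{W_p^1}^p$. Now the hypothesis $\|b\|_{W_p^1}^p/\beta\ge\|c\|_{W_p^1}^p/\gamma$ rearranges to $\tfrac{\beta}{\gamma}\|c\|_{W_p^1}^p\le\|b\|_{W_p^1}^p$, so that $(t^p-1)\|c\|_{W_p^1}^p<\tfrac{p}{q}\|b\|_{W_p^1}^p$. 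Therefore $\|u\|_{W_p^1}^p-\|U\|_{W_p^1}^p>\tfrac{q-p}{q}\|b\|_{W_p^1}^p$, and one may take $C(b,c)=\tfrac{q-p}{q}\|b\|_{W_p^1}^p$, which indeed depends only on $\|b\|_{W_p^1}$.

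I expect no serious obstacle: the lemma is elementary once the bookkeeping is arranged. The two points that genuinely require care are the direction of the hypothesis — it is exactly what certifies that $b$ is the \emph{less efficient} hump, so that folding its $L_q$-mass into $c$ lowers the energy rather than raising it — and the strictness of the conclusion, which I would trace to the strict concavity of $s\mapsto(1+s)^{p/q}$ (legitimate because $b\not\equiv 0$ forces $\beta>0$, hence $s>0$). I would also double-check the additivity of the gradient norm across the separated supports, which is where the Sobolev structure, rather than mere measurability, is used.
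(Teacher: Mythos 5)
Your proof is correct, and it is precisely the ``simple calculation'' that the paper leaves implicit: the paper's entire proof of this proposition is the single sentence \emph{``The proof relies on a simple calculation,''} deferring to Kolonitskii's preprint. Your argument --- additivity of the $L_q$- and $W_p^1$-norms over the separated supports (using that Sobolev gradients vanish a.e.\ where the function does), the normalization $t=\big((\beta+\gamma)/\gamma\big)^{1/q}$, and the strict concavity (Bernoulli) bound $(1+s)^{p/q}<1+\tfrac{p}{q}\,s$ for $s>0$ combined with hypothesis \eqref{hump_destruction_inequality} --- yields the explicit admissible constant $C(b,c)=\tfrac{q-p}{q}\,\|b\|_{W_p^1}^p$, which depends only on the quantities allowed in the statement.
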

\begin{proof}
The proof relies on a simple calculation.
\end{proof}

The following lemma was proved in \cite{ByTan} (Lemma 2.9) for the case $p=2$. The general case
is very similar but for convenience's sake we prove it here.
\begin{lemma}
\label{no_vanishing_lemma}
Suppose a sequence $u_R$ is bounded in $W^1_p (\RN^n)$,  $1 < p < q < p^*$ and for some $\rho > 0$
$$
\lim\limits_{R \to \infty} \sup\limits_{x \in \omega}\int\limits_{B(x, \rho)}|u_R|^q\, dx = 0
$$
where $\omega$ is an open subset in $\RN^n$.

Then one has
$$
\int\limits_{\omega}|u_R|^q\, dx \to 0 \quad \mbox{as} \quad R \to \infty.
$$

In case $p < n$ the statement is also true for $q = p^*$.
\end{lemma}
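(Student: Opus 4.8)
The plan is to reduce the global statement to a single \emph{local} Sobolev inequality on a ball and then sum it over a cover of $\omega$ of bounded multiplicity, extracting the smallness from the vanishing hypothesis by H\"older's inequality. The one estimate I need is that for the fixed radius $\rho$ and every $x$,
\begin{equation*}
\int_{B(x,\rho)}|u|^q\,dx \le C\Big(\int_{B(x,\rho)}|u|^q\,dx\Big)^{1-p/q}\,\|u\|_{W^1_p(B(x,\rho))}^p,
\end{equation*}
with $C=C(n,p,q,\rho)$ independent of $x$. After dividing by the first factor on the right, this is exactly the Sobolev embedding $W^1_p(B(x,\rho))\emb L_q(B(x,\rho))$, which holds for $q\in[p,p^*]$ when $p<n$ and for every finite $q>p$ when $p\ge n$; translation invariance makes $C$ uniform in $x$. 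This is also why the borderline exponent $q=p^*$ is admissible here: I use only the embedding inequality, never compactness.

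Next I would fix a countable cover of $\omega$ by balls $B(x_i,\rho)$ of bounded overlap. Choosing $\{x_i\}\subset\omega$ to be a maximal $\rho$-separated set, the balls $B(x_i,\rho)$ cover $\omega$, while the disjointness of the balls $B(x_i,\rho/2)$ forces a purely dimensional bound $N=N(n)$ on the number of indices $i$ for which a given point lies in $B(x_i,\rho)$. Consequently $\sum_i\|u\|_{W^1_p(B(x_i,\rho))}^p\le N\,\|u\|_{W^1_p(\RN^n)}^p$. Keeping the centers $x_i$ inside $\omega$ is what lets me invoke the hypothesis, since then $\int_{B(x_i,\rho)}|u_R|^q\le\sup_{x\in\omega}\int_{B(x,\rho)}|u_R|^q$.

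Summing the local inequality over the cover and bounding the first factor by the supremum gives
\begin{equation*}
\int_{\omega}|u_R|^q\,dx \le \sum_i\int_{B(x_i,\rho)}|u_R|^q\,dx \le C\,N\Big(\sup_{x\in\omega}\int_{B(x,\rho)}|u_R|^q\,dx\Big)^{1-p/q}\,\|u_R\|_{W^1_p(\RN^n)}^p.
\end{equation*}
Since $\|u_R\|_{W^1_p(\RN^n)}$ is bounded, the vanishing hypothesis drives the supremum to $0$, and because $1-p/q>0$ the entire right-hand side tends to $0$ as $R\to\infty$. This is precisely the structure already visible in the radial estimate \eqref{NoGoodName}, where the factor $\big(\sup_k\int|u|^4\big)^{1/2}$ plays the role of the small quantity.

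I expect the only real care to lie in the two uniformity claims: that the Sobolev constant on $B(x,\rho)$ is independent of the center (immediate from translation invariance, though it does depend on the fixed $\rho$), and that the overlap multiplicity $N$ is genuinely dimensional. Everything else is H\"older's inequality and summation, so the argument is short, and the case $q=p^*$ needs no separate treatment.
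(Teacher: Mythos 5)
Your proof is correct and is essentially the paper's own argument: the paper tiles $\RN^n$ by disjoint cubes $Q_m$ of side $d<\rho/\sqrt{n}$ (so each cube meeting $\omega$ lies in some $B(x,\rho)$ with $x\in Q_m\cap\omega$), applies the same translation-invariant local Sobolev inequality, and uses the same interpolation $\sum_m\|u\|_{L_q(Q_m)}^q\le\big(\sup_m\|u\|_{L_q(Q_m)}\big)^{q-p}\sum_m\|u\|_{L_q(Q_m)}^p\le C\big(\sup_m\|u\|_{L_q(Q_m)}\big)^{q-p}\|u\|_{W_p^1(\RN^n)}^p$, with disjointness of the cubes playing the role of your bounded-overlap constant $N(n)$. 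Both versions use only the embedding inequality, never compactness, so the endpoint case $q=p^*$ for $p<n$ is handled identically.
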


\begin{proof}

Let $d$ be a positive real number less than $\rho/\sqrt{n}$.

For $m = (m_1, \ldots, m_n) \in \ZN^n$ we write
$$
Q_m = [m_1d , (m_1+1)d] \times [m_2d , (m_2+1)d] \times \cdots \times [m_nd , (m_n+1)d].
$$

By the Sobolev inequality, there exists $C>0$ such that
$$||u||_{L_q(Q_m)} \le C ||u||_{W_p^1(Q_m)}$$
for all $u \in W_p^1(Q_m)$.

Let $M$ be the set of $m \in \ZN^n$ such that $Q_m \cap \omega \ne \emptyset$ and let $\Omega = \bigcup\limits_{m \in M}Q_m$.

For any $u \in W_p^1(\RN^n)$ we deduce
\begin{multline}
||u||^q_{L_q(\Omega)}  = \sum\limits_{m \in M} ||u||^q_{L_q(Q_m)} \le (\sup\limits_{m \in M}||u||_{L_q(Q_m)})^{q-p}\sum\limits_{m \in M} ||u||^p_{L_q(Q_m)} \\
\le C^p (\sup\limits_{m \in M}||u||_{L_q(Q_m)})^{q-p} \sum\limits_{m \in M} ||u||^p_{W_p^1(Q_m)} = C^p (\sup\limits_{m \in M}||u||_{L_q(Q_m)})^{q-p} ||u||^p_{W_p^1(\Omega)} \\
\le C^p (\sup\limits_{m \in M}||u||_{L_q(Q_m)})^{q-p} ||u||^p_{W_p^1(\RN^n)}.
\label{Lem1ineq}
\end{multline}

By the choice of $d$, $Q_m \subset B(x, \rho)$ for all $x \in Q_m$. Therefore, for $m \in M$ and $x \in Q_m \cap \omega$, we have
$$
||u||_{L_q(Q_m)} \le ||u||_{L_q(B(x, \rho))},
$$
and $\sup_{m \in M}||u_R||_{L_q(Q_m)} \to 0$ by hypothesis.

Since $u_R$ is bounded in $W_p^1(\RN^n)$ it follows from \eqref{Lem1ineq} that $||u_R||_{L_q(\Omega)} \to 0.$
Since $\omega \subset \Omega$ we are done.
\end{proof}

\begin{lemma}[an analogue of Lemma 3.1 from \cite{kolprepr}]
\label{cut_off_lemma}
Suppose a sequence of functions $\{u_R\}$ is normalized in $L_q(\Omega_R)$ and bounded in $W_p^1(\Omega_R)$.
Let $\{x_R\}$ be a concentration sequence of $|u_R|^q$, i.e. for every $\eps > 0$ there exists a radius $\rho > 0$
and a sequence of radii $\rho\sht (R)$ that satisfy concentration condition for $G(s) = |s|^q$. Define $\sigma$ as a cut-off function that satisfies
\begin{gather*}
\sigma (x) =
\begin{cases}
	1, & |x-x_R| \le (11\rho + \rho\sht (R))/12; \\
	1, & |x-x_R| \ge (\rho + 11\rho\sht (R))/12; \\
	0, & (5\rho + \rho\sht (R))/6 \le |x-x_R| \le (\rho + 5\rho\sht (R))/6;
\end{cases}
\\
|\nabla \sigma| \le \frac{12}{\rho\sht (R) - \rho}.
\end{gather*}
We claim inequalities
\begin{equation}
||\sigma u_R||_{W_p^1(\Omega_R)} \le ||u_R||_{W_p^1(\Omega_R)} + o_R(1),
\label{cut-offWp-norm}
\end{equation}
\begin{equation}
||\sigma u_R||_{L_q(\Omega_R)} \ge 1 - o_\eps(1)
\label{cut-offLq-norm}
\end{equation}
hold true for all sufficiently large $R$.
\end{lemma}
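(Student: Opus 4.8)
The plan is to prove the two inequalities by separate mechanisms: \eqref{cut-offLq-norm} is a consequence of the concentration condition, whereas \eqref{cut-offWp-norm} rests only on the geometry of $\sigma$ together with the subcritical relation $q<p^*$.

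First I would treat \eqref{cut-offLq-norm}. Since $\rho\sht(R)\to\infty$ while $\rho$ is fixed, for large $R$ one has $(11\rho+\rho\sht(R))/12>\rho$ and $(\rho+11\rho\sht(R))/12<\rho\sht(R)$, so by construction $\sigma\equiv 1$ both on $B(x_R,\rho)$ and on $\Omega_R\setminus B(x_R,\rho\sht(R))$. The concentration condition \eqref{concentration} for $G(s)=|s|^q$, together with $\int_{\Omega_R}|u_R|^q\,dx=1$, gives $\int_{B(x_R,\rho)}|u_R|^q\,dx\ge\lambda-\eps$ and $\int_{\Omega_R\setminus B(x_R,\rho\sht(R))}|u_R|^q\,dx\ge(1-\lambda)-\eps$. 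As these two sets are disjoint, $\sigma\equiv1$ on both, and $0\le\sigma\le1$,
\[
\int_{\Omega_R}\sigma^q|u_R|^q\,dx\ge\int_{B(x_R,\rho)}|u_R|^q\,dx+\int_{\Omega_R\setminus B(x_R,\rho\sht(R))}|u_R|^q\,dx\ge 1-2\eps,
\]
and taking $q$-th roots yields $||\sigma u_R||_{L_q}\ge(1-2\eps)^{1/q}=1-o_\eps(1)$.

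For \eqref{cut-offWp-norm} I would split the norm into its $L_p$ and gradient parts. The $L_p$ part is immediate from $0\le\sigma\le1$, namely $||\sigma u_R||_{L_p}\le||u_R||_{L_p}$. For the gradient, the product rule and Minkowski's inequality give
\[
||\nabla(\sigma u_R)||_{L_p}\le||\sigma\nabla u_R||_{L_p}+||u_R\nabla\sigma||_{L_p}\le||\nabla u_R||_{L_p}+||u_R\nabla\sigma||_{L_p},
\]
so the whole matter reduces to proving $||u_R\nabla\sigma||_{L_p}=o_R(1)$. Granting this, $||\sigma u_R||_{W_p^1}^p\le||u_R||_{L_p}^p+(||\nabla u_R||_{L_p}+o_R(1))^p$, and since $||u_R||_{W_p^1}$ is bounded, expanding the $p$-th power and then taking a $p$-th root (using that $t\mapsto t^{1/p}$ is uniformly continuous on bounded sets) converts this into $||\sigma u_R||_{W_p^1}\le||u_R||_{W_p^1}+o_R(1)$.

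The crux, and the step I expect to be the main obstacle, is the estimate of $||u_R\nabla\sigma||_{L_p}$. Let $\mathcal{A}$ be the support of $\nabla\sigma$; by construction $\mathcal{A}\subset B(x_R,\rho\sht(R))\setminus B(x_R,\rho)$ and $|\nabla\sigma|\le 12/(\rho\sht(R)-\rho)$ there. Hölder's inequality with exponents $q/p$ and $q/(q-p)$, followed by the normalization $||u_R||_{L_q}=1$, gives $\int_{\mathcal{A}}|u_R|^p\,dx\le(\int_{\mathcal{A}}|u_R|^q\,dx)^{p/q}|\mathcal{A}|^{1-p/q}\le|\mathcal{A}|^{1-p/q}$. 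Since $\mathcal{A}\subset B(x_R,\rho\sht(R))$ we have $|\mathcal{A}|\le C(\rho\sht(R))^n$, whence
\[
||u_R\nabla\sigma||_{L_p}^p\le\Big(\frac{12}{\rho\sht(R)-\rho}\Big)^p|\mathcal{A}|^{1-p/q}\le C\,(\rho\sht(R)-\rho)^{-p}\,(\rho\sht(R))^{n(1-p/q)}.
\]
The exponent of $\rho\sht(R)$ on the right is $n(1-p/q)-p$, which is negative exactly when $q<np/(n-p)=p^*$ (and automatically when $p\ge n$); since $\rho\sht(R)\to\infty$, this forces $||u_R\nabla\sigma||_{L_p}\to0$. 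Thus the delicate point is the competition between the decay $(\rho\sht(R))^{-p}$ coming from the gentle slope of $\sigma$ and the growth $(\rho\sht(R))^{n(1-p/q)}$ of the volume of the transition annulus, and it is precisely the subcriticality $q<p^*$ that tips this balance in our favour.
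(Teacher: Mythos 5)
Your proposal is correct and takes essentially the same route as the paper's own proof: the $L_q$ bound \eqref{cut-offLq-norm} is read off directly from the concentration condition, and the $W_p^1$ bound reduces to the key estimate $\|u_R\nabla\sigma\|^p_{L_p}\le C(\rho'(R)-\rho)^{-p}\,|\mathrm{supp}\,\nabla\sigma|^{1-p/q}\,\|u_R\|^p_{L_q}\le C(\rho'(R))^{n-np/q}(\rho'(R)-\rho)^{-p}=o_R(1)$, where subcriticality $q<p^*$ wins the same volume-versus-slope competition the paper exploits. The only cosmetic difference is that you split $\nabla(\sigma u_R)$ by Minkowski's inequality, which sidesteps the cross term that the paper bounds with an additional application of H\"older's inequality.
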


\begin{rem}
\label{cut-off-rem}
Suppose we have several concentration sequences. In that case we can use Lemma \ref{cut_off_lemma} to produce cut-off functions for each sequence and then multiply them
to obtain a cut-off function which isolates all of the concentration sequences. Note that \eqref{cut-offLq-norm} and \eqref{cut-offWp-norm} are still satisfied.
\end{rem}

\subsection{Concentration theorem}
\label{ssec:Conc_thm}
In this section we consider the functional
\begin{equation}
\tilde{J}[u] = \frac{\ib (|\nabla u|^p + |u|^p)\, dx}{\Big(\ib |u|^q\, dx \Big)^{p/q}}
\label{functionalJ}
\end{equation}
where $1 < p < q < p^*$.

\begin{lemma}
\label{less_than_two_seq_lemma}
Suppose $u_R \in W_p^1(\Omega_R)$ be a sequence of minimizers of functional \eqref{functionalJ}.
Then $u_R$ has no more than one concentration sequence.
\end{lemma}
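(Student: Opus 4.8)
The plan is to argue by contradiction through the ``hump destruction'' Proposition~\ref{hump_destruction_lemma}: if a minimizer $u_R$ carried two separate bumps, one could pour the $L_q$ mass of the energetically more expensive bump into the cheaper one, strictly lowering the functional and contradicting minimality.

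I would begin by normalizing $||u_R||_{L_q}=1$, so that $\tilde J[u_R]=||u_R||_{W_p^1}^p=:m(R)$; testing \eqref{functionalJ} against a fixed compactly supported function shows, by the argument of Lemma~\ref{LambdaBoundSepar}, that $m(R)$ stays bounded as $R\to\infty$. Suppose, for contradiction, that $u_R$ has two concentration sequences $x_R$ and $y_R$ of positive weights. Being distinct, hence non-equivalent in the sense of Remark~\ref{BoundShiftRem}, they satisfy $|x_R-y_R|\to\infty$ along a subsequence.

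Fixing $\eps>0$, I would isolate the two bumps: by Remark~\ref{cut-off-rem} there is a cut-off $\sigma$ separating them; letting $\sigma_1,\sigma_2$ be its components about $x_R,y_R$, I set $b=\sigma_1u_R$, $c=\sigma_2u_R$ and $a=(\sigma-\sigma_1-\sigma_2)u_R$. Since $|x_R-y_R|\to\infty$, the three supports are pairwise separated once $R$ is large. Estimate \eqref{cut-offLq-norm}, applied to each component, shows the bumps carry definite mass, so $b\not\equiv0$ and $c\not\equiv0$, while \eqref{cut-offWp-norm} together with the Sobolev inequality keeps all four quantities $||b||_{L_q},||c||_{L_q},||b||_{W_p^1},||c||_{W_p^1}$ inside a fixed compact subset of $(0,\infty)$ bounded away from $0$ and $\infty$, once $\eps$ is small and $R$ large. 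After relabeling $b$ and $c$ if necessary, I arrange the ratio hypothesis \eqref{hump_destruction_inequality}.

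Applying Proposition~\ref{hump_destruction_lemma} to $u=a+b+c=\sigma u_R$ produces $U$ with $||U||_{L_q}=||\sigma u_R||_{L_q}$ and $||U||_{W_p^1}^p<||\sigma u_R||_{W_p^1}^p-C(b,c)$. Combining this with the cut-off bounds $||\sigma u_R||_{W_p^1}^p\le m(R)+o_R(1)$ and $||\sigma u_R||_{L_q}^p\ge 1-o_\eps(1)$ gives
$$
\tilde J[U]=\frac{||U||_{W_p^1}^p}{||\sigma u_R||_{L_q}^p}<\frac{m(R)+o_R(1)-C(b,c)}{1-o_\eps(1)}.
$$
The step I expect to be the main obstacle is to ensure that the gain $C(b,c)$ is not absorbed by the error terms. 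Since $C(b,c)$ depends only on the four norms above, which stay in a fixed compact subset of $(0,\infty)$, it admits a uniform positive lower bound $C_0>0$. As $m(R)$ is bounded, choosing first $\eps$ small and then $R$ large forces the right-hand side strictly below $m(R)=\tilde J[u_R]$, contradicting minimality. Hence $u_R$ has at most one concentration sequence.
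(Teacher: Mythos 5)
Your proof is correct and follows essentially the same route as the paper's own: isolate the two bumps with the cut-off of Remark \ref{cut-off-rem}, apply Proposition \ref{hump_destruction_lemma} to the components $a$, $b$, $c$, and contradict minimality via the bounds \eqref{cut-offWp-norm} and \eqref{cut-offLq-norm}. You are in fact somewhat more explicit than the paper, which leaves implicit both the boundedness of the minimum value and the uniformity (in $\eps$ and $R$) of the energy gain $C(b,c)$.
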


\begin{proof}
Since $\tilde{J}[tu] = \tilde{J}[u]$ we may assume that $||u_R||_{L_q}=1$.

Suppose there are two sequences $x_R$ and $y_R$ with weights $\lambda_1$ and $\lambda_2$. We are going to show it is more profitable to have only one of them.
Let $\sigma$ be a cut-off function that isolates $x_R$ and $y_R$ (Remark \ref{cut-off-rem}). Let $\sigma_1$ and $\sigma_2$ be the components of $\sigma$
with $x_R \in \supp\, \sigma_1$ and $y_R \in \supp\, \sigma_2$. Set $\sigma_0 = \sigma - \sigma_1 - \sigma_2$. Without loss of generality, assume
$$
\frac{||\sigma_1u_R||_{W_p^1}^p}{||\sigma_1u_R||_{L_q}^q} \ge \frac{||\sigma_2u_R||_{W_p^1}^p}{||\sigma_2u_R||_{L_q}^q}.
$$
We apply Proposition \ref{hump_destruction_lemma} for functions $a=\sigma_0u_R$, $b=\sigma_1u_R$ and $c=\sigma_2u_R$ and obtain function $v_R$ which satisfies
$$
||v_R||_{W_p^1(\Omega_R)} < ||\sigma u_R||_{W_p^1(\Omega_R)} - \mu,
$$
$$
||v_R||^q_{L_q(\Omega_R)} = ||\sigma u_R||^q_{L_q(\Omega_R)} > 1 - o_\eps(1).
$$
The last inequality is by Lemma \ref{cut_off_lemma}. Thus,
$$\tilde{J}[v_R] < \tilde{J}[\sigma u_R] - \mu_1 \le \tilde{J}[u_R](1-\mu_2 + o_\eps(1) + o_R(1))$$
for some $\mu_2>0$, independent of $\eps$ and $R$, which contradicts the minimality of $u_R$.
\end{proof}

\begin{thr}[Concentration theorem]
\label{Concentration_thr}
Suppose $\Omega_R$ be a sequence of Lipschitz domains in $\RN^n$ such that the set of extension
operators from $W_p^1(\Omega_R)$ to $W_p^1(\RN^n)$ is uniformly bounded in norm
(see \cite[Chapter 6, Section 3]{St}). Suppose $u_R \in W_p^1(\Omega_R)$ is a sequence of minimizers of functional \eqref{functionalJ}. Further, suppose that
$\sup\limits_R ||u_R||_{W_p^1} < \infty$. Then $u_R$ has exactly one concentration sequence of weight $1$.
\end{thr}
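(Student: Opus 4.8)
The plan is to combine the Lions-type dichotomy of Proposition \ref{point_concentration_lemma} with the two lemmata already in place: the absence of vanishing (Lemma \ref{no_vanishing_lemma}) and the bound of at most one concentration sequence for minimizers (Lemma \ref{less_than_two_seq_lemma}). Since $\tilde J[tu]=\tilde J[u]$, I first normalize $\|u_R\|_{L_q(\Omega_R)}=1$. I then pass to $\RN^n$ by setting $w_R=\Pi_R u_R$, where $\Pi_R$ is the extension operator; by the hypothesis $\sup_R\|\Pi_R\|<\infty$ together with $\sup_R\|u_R\|_{W_p^1}<\infty$, the family $\{w_R\}$ is bounded in $W_p^1(\RN^n)$ and $w_R|_{\Omega_R}=u_R$. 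Applying Proposition \ref{point_concentration_lemma} to $w_R$ with $G(s)=|s|^q$ produces, along a subsequence, either concentration or vanishing, and it remains to (i) exclude vanishing, (ii) secure uniqueness of the concentration sequence, and (iii) prove its weight equals $1$.

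For step (i), suppose vanishing held, i.e. $\sup_{x}\int_{B(x,\rho)}|w_R|^q\,dx\to 0$ for every $\rho>0$. Feeding $w_R$ into Lemma \ref{no_vanishing_lemma} with $\omega=\Omega_R$ (the constant there comes only from the unit-cube Sobolev embedding and is thus independent of $R$) would force $\|w_R\|_{L_q(\Omega_R)}=\|u_R\|_{L_q(\Omega_R)}\to 0$, contradicting the normalization. Hence $w_R$ concentrates along some sequence $x_R$ with weight $\lambda\in(0,1]$; the extension being localized to a bounded collar of $\Omega_R$, the point $x_R$ stays within bounded distance of $\Omega_R$, and by Remark \ref{BoundShiftRem} we may take it in $\overline{\Omega_R}$, so that $x_R$ is genuinely a concentration sequence of $u_R$. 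For step (ii), since $u_R$ minimizes \eqref{functionalJ}, Lemma \ref{less_than_two_seq_lemma} guarantees at most one concentration sequence, so exactly one survives.

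The heart of the matter is step (iii). Suppose $\lambda<1$. Using the single-sequence cut-off $\sigma$ of Lemma \ref{cut_off_lemma}, whose region $\{\sigma=1\}$ has an inner component around $x_R$ and an outer component, I write $\sigma u_R=b+c$ with $b=\sigma_1 u_R$ supported in $B(x_R,\rho)$ and $c=\sigma_2 u_R$ supported outside $B(x_R,\rho'(R))$, so that $b$ and $c$ have separated supports; by the definition of the weight, $\|b\|_{L_q}^q\to\lambda$ and $\|c\|_{L_q}^q\to 1-\lambda$, whence both are nonzero for large $R$. After relabelling to arrange \eqref{hump_destruction_inequality}, Proposition \ref{hump_destruction_lemma} (applied with $a=0$) yields a competitor $v_R$ with $\|v_R\|_{L_q}=\|\sigma u_R\|_{L_q}$ and $\|v_R\|_{W_p^1}^p<\|\sigma u_R\|_{W_p^1}^p-C(b,c)$. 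Combining this with the cut-off estimates \eqref{cut-offWp-norm} and \eqref{cut-offLq-norm} exactly as in the proof of Lemma \ref{less_than_two_seq_lemma} gives $\tilde J[v_R]<\tilde J[u_R]\bigl(1-\mu+o_\eps(1)+o_R(1)\bigr)$ for some $\mu>0$, contradicting minimality once $\eps$ is small and $R$ large. Therefore $\lambda=1$, and the theorem follows.

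I expect the principal obstacle to be the bookkeeping forced by the extension step: Proposition \ref{point_concentration_lemma} describes concentration of $w_R$ on $\RN^n$, and one must verify that the mass created by $\Pi_R$ stays in a bounded neighbourhood of $\partial\Omega_R$ (localizing the Stein extension by a cut-off if necessary), so that the concentration point, its weight, and the residual mass $1-\lambda$ may all be read off for $u_R$ on $\Omega_R$ rather than for $w_R$, without spurious concentration escaping into the collar. A secondary subtlety is checking that the residual piece $c$ remains a legitimate nonzero competitor with support separated from $b$ and $L_q$-mass bounded below by roughly $1-\lambda$, so that the hump-destruction gain $C(b,c)$ is bounded away from $0$ uniformly in $R$ and the final strict inequality survives the limits in $\eps$ and $R$.
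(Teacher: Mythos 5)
Your step (iii) --- destroying either the bubble or the residual via Proposition \ref{hump_destruction_lemma} with $a=0$ and concluding $\lambda=1$ --- is correct and is a genuinely different mechanism from the paper's: the paper instead shows that the residual $\sigma_0 u_R$ must satisfy the vanishing condition (Proposition \ref{point_concentration_lemma} combined with the at-most-one statement of Lemma \ref{less_than_two_seq_lemma}) and then contradicts Lemma \ref{no_vanishing_lemma}, since the residual retains $L_q$-mass $1-\lambda>0$. Your variational route is self-contained once a genuine concentration sequence \emph{of $u_R$} is in hand, and the uniformity of the gain $C(b,c)$ that you flag does hold: the destroyed hump has $L_q$-norm bounded below by roughly $\min(\lambda,1-\lambda)^{1/q}$, and its $W_p^1$-norm is bounded below by the uniform Sobolev inequality obtained from the uniformly bounded extension operators.

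The genuine gap is upstream, in step (i): you run the Lions dichotomy on $w_R=\Pi_R u_R$ instead of on $u_R$ itself. The weight $\lambda$ you obtain is then relative to $\|w_R\|^q_{L_q(\RN^n)}$, and the collar mass $\|w_R\|^q_{L_q(\RN^n\setminus\Omega_R)}$ is only \emph{bounded} (of order one), not small: the hypothesis gives a bound on $\|\Pi_R\|$ and nothing more. Consequently the bubble of $w_R$ at $x_R$ may consist partly or wholly of collar mass, so it does not follow that $u_R$ concentrates at $x_R$ at all, nor that its weight (relative to $\|u_R\|^q_{L_q(\Omega_R)}=1$) is determined by $\lambda$. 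Under the literal hypotheses the transfer is in fact false: replace a bona fide extension by $u\mapsto \Pi_R u+\phi(u)\psi_R$ with $\phi$ a bounded linear functional and $\psi_R$ a fixed bump supported in the collar; this is still a uniformly bounded extension operator, yet it can create a concentration point for $w_R$ while $u_R$ vanishes. Localizing the Stein extension by a cut-off, as you propose, bounds the \emph{width} of the collar but not its \emph{mass}, so it does not close this. The structural repair is the paper's arrangement: apply Proposition \ref{point_concentration_lemma} directly to $u_R$ extended by zero --- the proposition is purely measure-theoretic, so no Sobolev regularity of the zero extension is needed --- making all weights automatically relative to $\|u_R\|_{L_q}^q=1$, and invoke the extension operator only once, at the end, to transport a function already known to vanish into Lemma \ref{no_vanishing_lemma}. (Even there the paper quietly uses that the Stein extension \emph{preserves the vanishing condition}, a locality property beyond mere boundedness, so some such ingredient is unavoidable; but your arrangement needs it in a strictly stronger form, for the whole concentration structure and its weights, rather than only for vanishing.)
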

\begin{proof}
We again assume that $||u_R||_{L_q}=1$.

Firstly, by Lemma \ref{less_than_two_seq_lemma} we have no more than one concentration sequence.
It remains to prove that it exists and has weight $1$.
Let us assume the converse. There are two cases: either there is a concentration sequence $x_R$
with weight $\lambda < 1$ or no sequence at all.
In the first case consider the cut-off function $\sigma$ from Lemma \ref{cut_off_lemma}, let
$\sigma_1$ be the component which isolates $x_R$ and define
$\sigma_0 = \sigma - \sigma_1$ (exactly as in the previous Lemma). In the second case we take
$\sigma_0 \equiv 1$. Now Proposition \ref{point_concentration_lemma} implies
that $\sigma_0u_R$ satisfies the vanishing condition. Moreover, $||\sigma_0u_R||_{L_q}$ tends to
$1-\lambda$ in the first case and is equal to $1$ in the second case.

Let $\Pi_R$ be the extension operator from $W_p^1(\Omega_R)$ to $W_p^1(\RN^n)$. It follows that
$$
||\Pi_R(\sigma_0u_R)||_{W_p^1} \le ||\Pi_R||\sup\limits_R (||u_R||_{W_p^1} + o_R(1)) < C.
$$
In other words, $\Pi_R(\sigma_0u_R)$ is a bounded sequence in $W_p^1(\RN^n)$.
However, inequality
$$
\lim\limits_{R \to \infty}||\Pi_R(\sigma_0u_R)||_{L_q(\RN^n)} \ge \lim\limits_{R \to \infty}
||\sigma_0u_R||_{L_q(\Omega_R)} > 0
$$
contradicts Lemma \ref{no_vanishing_lemma} since the extension operators preserve the
vanishing condition.
\end{proof}

\section{Appendix B: Breather type solutions and center manifold reduction}
\label{ssec:App B}
The variational approach of previous sections provides families of
breather type solutions (see Subsection \ref{ssec:Breather}) with large enough periods.
It is of interest to find a lower bound for these periods.
An approach to this problem was proposed in \cite{AEKLS}. It relies on the bifurcation
theory and an extension of the center manifold theory to elliptic equations in cylindric
domains (see \cite{Kirsch} and more rigorously in \cite{Mielke}). To explain this approach, let us
rewrite the equation \eqref{MainEq} formally as a dynamical system w.r.t. the ``time''\, $y$:
\begin{equation}\label{formal}
u_y = v,\;v_y = -u_{xx} + u - u^3.
\end{equation}
The system has the plane wall type solution $u_0(x)= \pm\sqrt{2}/\cosh x$ which formally is
``the equilibrium state'' of this system. Let us linearize the system at this
solution and study the related spectral problem. We obtain the linear system
which is equivalent to the Sturm-Liouville equation
\[
{\mathcal L} \phi \equiv -\frac{d^2 \phi}{dx^2}+(1-3u_0^2(x))\phi = \lambda^2\phi.
\]
Consider this equation in the space of even in
$x$ functions in $L^2(\mathbb  R)$ (we may do it due to its $x$-parity). The function $-3u_0^2(x)$
(the potential, if one interprets this equation as the Schr\"odinger equation)
is rapidly decaying as $|x|\to \infty.$ Thus the spectrum of the operator ${\mathcal L}-1$
consists of finitely many negative eigenvalues and the continuous spectrum $[0,\infty)$ \cite[Ch. IX]{FA}.
For the equation in question the unique eigenvalue is $\lambda^2 = -3$ with the eigenfunction
$h(x)= c/\cosh^2(x)$ and the continuous spectrum $[1, \infty)$ is separated from the eigenvalue.

For the system linearized at the equilibrium its spectrum consists of the pair
of pure imaginary eigenvalues $\pm i\sqrt{3}$ and two rays of continuous
spectrum $\lambda \ge 1$ and $\lambda \le -1.$ Thus, if the center
manifold theorem from \cite{Mielke} is valid for this case, then in the whole phase space
one gets a smooth local two dimensional center manifold through the equilibrium
which corresponds to eigenvalues $\pm i\sqrt{3}$.
The restriction of the system (\ref{formal}) to this manifold generates a two-dimensional
Hamiltonian system in ``time'' variable $y$ with the equilibrium -- center -- whose neighborhood is
filled with periodic orbits. These orbits provide periodic in $y$ solutions of the initial
elliptic equation. The limit of their periods, as their amplitudes tend to zero, is the frequency
of linearized oscillations $2\pi/\sqrt{3}$.

In \cite{AEKLS} this scheme was presented with the necessary calculations for the related
operators. But the needed estimates on operators that would allow one to apply the center manifold theorem from
\cite{Mielke} were not proved there.

\section{Appendix C: Breather type solutions and homoclinic orbits}
\label{ssec:App C}

To demonstrate another view on breather type solutions, namely, their connection with
homoclinic orbits of some dynamical system,
we use the Fourier expansion in $y$ variable.
Denote by $l$ the period in $y$ of such a solution, $u(x,y)\equiv u(x,y+l)$. We also
assume $u(x,y)\rightarrow 0$ as $x\rightarrow \pm\infty$.

After plugging the Fourier series into the equation we get an infinite system of ODEs for
the Fourier coefficients. Using the Bubnov--Galerkin
procedure, we truncate this system keeping only three modes: zero mode
and two complex conjugated modes $\exp[\pm i 2\pi y/l]$. Thus the anzatz
\begin{equation}
u(x,y)=(U_1(x)-iV_1(x))e^{-i\frac{2\pi}{l}y}+U_0(x)+(U_1(x)+iV_1(x))e^{i\frac{2\pi}{l}y}
\label{2.2}
\end{equation}
gives an approximation solution.
After plugging (\ref{2.2}) into (\ref{MainEq}), projecting on the subspace of related modes and
scaling we arrive at the following Euler-Lagrange type system of second order ODEs:
\begin{equation}
\left\{
\begin{array}{ll}
U''_1-\big(\frac{4\pi^2}{l^2}+1\big)U_1+6U_1U_0^2+3(U^2_1+V^2_1)U_1=0, \\
V''_1-\big(\frac{4\pi^2}{l^2}+1\big)V_1+6V_1U_0^2+3(U^2_1+V^2_1)V_1=0, \\
U''_0-U_0+2U^3_0+6(U^2_1+V^2_1)U_0=0.
\end{array}\right.
\label{2.3}
\end{equation}
It can be transformed to the Hamiltonian form with Hamiltonian
\begin{multline*}
H=\frac{p^2_0+p^2_1+q^2_1}{2}-\frac{U^2_0+\big(\frac{4\pi^2}{l^2}+1\big)(U^2_1+V^2_1)}{2}\\
+\frac{U^4_0}{2}+3U^2_0(U^2_1+V^2_1)+\frac{3}{4}(U^2_1+V^2_1)^2.
\end{multline*}
It is easy to verify that this system has an additional integral $K = p_1V_1-q_1U_1.$

Solutions corresponding to the plane walls belong to the two-dimensional invariant subspace
$U_1 = p_1 =$
$V_1 = q_1 =0,$ and they form two closed homoclinic loops of the saddle equilibrium $U_0 = p_0 =0.$

Breather type solutions we are searching for correspond to homoclinic orbits
of the zero equilibrium. This equilibrium is a saddle, its eigenvalues
are three nonzero real pairs $\pm 1, \pm \sqrt{1+4\pi^2/l^2},\pm \sqrt{1+4\pi^2/l^2}$.
Hence, the equilibrium has three-dimensional smooth stable and unstable invariant manifolds, both of
which lie in the level $H=0$. Homoclinic orbits to the equilibrium belong to the intersection of
these two manifolds, hence they are doubly asymptotic as $x\to \pm\infty$ to the equilibrium.
Since the system has integral $K$ the value of $K$ is also
preserved along the homoclinic orbit. Therefore homoclinic orbits should  belong to the joint
level of two integrals $H=K=0$.

For the Hamiltonian system obtained the following assertion is valid.

\begin{propos} The system has two homoclinic orbits of the zero equilibrium in the invariant 2-plane
$U_1 = p_1 =$ $V_1 = q_1 =0$ and a one parameter family of homoclinic orbits in the
invariant 4-plane $U_0=p_0=0$.
\end{propos}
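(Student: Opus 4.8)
The plan is to exploit the two invariant coordinate subspaces named in the statement: on each of them the system \eqref{2.3} collapses to a lower-dimensional integrable problem whose homoclinic orbits can be written down explicitly. Throughout I abbreviate $\omega^2 = 1 + 4\pi^2/l^2$ and recall that $p_0 = U_0'$, $p_1 = U_1'$, $q_1 = V_1'$. First I would verify invariance, which is immediate: in each of the first two equations of \eqref{2.3} every term carries a factor $U_1$ or $V_1$, so $\{U_1 = V_1 = p_1 = q_1 = 0\}$ is flow-invariant; likewise every term of the third equation carries a factor $U_0$, so $\{U_0 = p_0 = 0\}$ is flow-invariant. On either subspace the linearization at the origin keeps a hyperbolic saddle structure, so ``homoclinic to zero'' is meaningful.

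On the $2$-plane $\{U_1 = V_1 = p_1 = q_1 = 0\}$ the system reduces to the single autonomous equation $U_0'' = U_0 - 2U_0^3$, a one-degree-of-freedom Hamiltonian system with energy $E = \tfrac12 p_0^2 - \tfrac12 U_0^2 + \tfrac12 U_0^4$. A standard phase-plane analysis shows the origin is a saddle and that the critical level $E = 0$ consists of the equilibrium together with exactly two homoclinic loops, one in each half-plane $\pm U_0 > 0$; explicitly these are $U_0(x) = \pm 1/\cosh x$, which one checks directly solves the equation and tends to $0$ as $x \to \pm\infty$. This produces the claimed two homoclinic orbits.

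On the $4$-plane $\{U_0 = p_0 = 0\}$ the reduced system
\[
U_1'' = \omega^2 U_1 - 3(U_1^2+V_1^2)U_1, \qquad V_1'' = \omega^2 V_1 - 3(U_1^2+V_1^2)V_1
\]
is rotationally symmetric: it is invariant under simultaneous rotation of $(U_1,V_1)$ and $(p_1,q_1)$, and the integral $K = p_1 V_1 - q_1 U_1$ is the associated angular momentum. To exhibit the family I would start from the invariant ray $V_1 \equiv q_1 \equiv 0$, on which the equation becomes $U_1'' = \omega^2 U_1 - 3U_1^3$, again a double-well one-degree-of-freedom system with the explicit homoclinic orbit $U_1(x) = \omega\sqrt{2/3}\,/\cosh(\omega x)$. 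Applying the rotational symmetry to this single orbit sweeps out a circle of distinct homoclinic orbits $(U_1,V_1)(x) = \omega\sqrt{2/3}\,(\cos\theta_0,\sin\theta_0)/\cosh(\omega x)$, $\theta_0 \in [0,2\pi)$, which is the desired one-parameter family.

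The point to get right (and the natural obstacle) is to argue that this symmetry-generated circle genuinely exhausts the homoclinic set and that the parameter is essential rather than spurious. Passing to polar coordinates $U_1 = r\cos\theta$, $V_1 = r\sin\theta$ turns the reduced system into a central-force problem with $K = -r^2\theta'$ and radial energy $\tfrac12 r'^2 + \tfrac{K^2}{2r^2} - \tfrac{\omega^2}{2}r^2 + \tfrac34 r^4 = H = 0$. Since a homoclinic orbit forces $r \to 0$ at both ends while the centrifugal term $K^2/(2r^2)$ blows up there against finite energy, any homoclinic orbit must have $K = 0$; with $K = 0$ the angle $\theta$ is frozen and the motion is radial along a fixed direction $\theta_0$, recovering exactly the family above. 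Collecting the decay estimates --- the $1/\cosh$ profiles give exponential decay matching the saddle eigenvalues $\pm\omega$ --- then completes the argument.
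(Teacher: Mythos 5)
Your proof is correct and follows essentially the same route as the paper's: restriction to the two invariant subspaces, explicit integration of the resulting one-degree-of-freedom problems (the $1/\cosh$ profiles), and use of the two integrals $H$ and $K$ --- in particular forcing $K=0$ along any homoclinic orbit --- to produce and exhaust the one-parameter family. Your explicit family agrees with the paper's formula for $U_1, V_1$ (with $\lambda=\omega$) up to a translation in $x$ and the sign convention $\theta_0 \mapsto \theta_0+\pi$.
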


The proof is done by means of a direct integration using two
integrals. For the one parameter family expressions for $U_1, V_1$ are as follows
$$
U_1=r\cos\theta=\frac{-2\lambda}{e^{\lambda x}+\frac{3}{2}e^{-\lambda x}}\cos\theta,\quad
V_1=r\sin\theta=\frac{-2\lambda}{e^{\lambda x}+\frac{3}{2}e^{-\lambda x}}\sin\theta
$$
where $\lambda^2=\frac{4\pi^2}{l^2}+1$. After that one can find expressions for
all functions $U_0, U_1, V_1$ and construct approximate solutions $U(x,y)$.

\section{Conclusions}

For the elliptic equation \eqref{MainEq} and its generalizations we have constructed nontrivial
solutions of several types: periodic with various lattices of periods,
breather type and radial.  Of course, many questions remain open. It seems that using methods
of \cite{MieZel} it is possible to obtain solutions with finite number
of humps located at different points on the plane with pairwise distances large enough. Also
it would be interesting to prove the existence of solutions which are localized
in both variables, invariant under the rotation by the angle $\pi/2$ and being not
rotationally invariant. Simulations with the equation (\ref{MainEq}) showed their existence
(see Figure 3 in \cite{Lerman2}).

\section{Acknowledgement}

L.M. Lerman acknowledges a partial support from the Russian Foundation of Basic Research
(grant 18-29-10081) and Ministry of Science and Education of Russian Federation
(project 1.3287.2017, target part).

A.I. Nazarov acknowledges a partial support from the Russian Foundation of Basic Research
(grant 17-01-00678).

\end{document}